\documentclass[11pt, letterpaper]{amsart}
\usepackage{float}
\usepackage[utf8]{inputenc}

\usepackage{amssymb, amsmath, amsthm, wasysym, mathrsfs}
\usepackage{hyperref}
\usepackage[capitalize,nameinlink,noabbrev]{cleveref}
\usepackage[alphabetic,lite]{amsrefs}
\usepackage{tikz-cd}
\usepackage{comment}
\usepackage{thmtools}
\usepackage{setspace}
\usepackage{mathtools}
\usepackage{enumitem}
\usepackage{thm-restate}

\usepackage{caption}
\usepackage{subcaption}
\usepackage{float}
\usepackage{todonotes}

\newtheorem{lemma}{Lemma}[section]
\newtheorem{lemma*}{Lemma}
\newtheorem{theorem}[lemma]{Theorem}

\newtheorem{prop}[lemma]{Proposition}
\newtheorem{cor}[lemma]{Corollary}

\newtheorem{question}[lemma]{Question}
\newtheorem{claim*}{Claim}
\newtheorem{thm}[lemma]{Theorem}
\newtheorem{defn}[lemma]{Definition}
\newtheorem{example}[lemma]{Example}

\theoremstyle{definition}
\newtheorem{remark}[lemma]{Remark}

\newtheorem*{thm:thm:mainhakentheorem}{\cref{thm:mainhakentheorem}}

\theoremstyle{plain}

    \newtheoremstyle{TheoremNum}
        {\topsep}{\topsep}              
        {\itshape}                      
        {}                              
        {\bfseries}                     
        {.}                             
        { }                             
        {\thmname{#1}\thmnote{ \bfseries #3}}
    \theoremstyle{TheoremNum}

\newcommand{\Hh}{{\mathbb H}}

\newcommand{\C}{{\mathbb C}}
\newcommand{\F}{{\mathbb F}}

\newcommand{\Q}{{\mathbb Q}}
\newcommand{\R}{{\mathbb R}}
\newcommand{\Z}{{\mathbb Z}}

\DeclareMathOperator{\Aut}{Aut}

\DeclareMathOperator{\N}{N}

\DeclareMathOperator{\SL}{SL}
\DeclareMathOperator{\GL}{GL}

\DeclareMathOperator{\free}{free}

\DeclareMathOperator{\PSL}{PSL}

\numberwithin{equation}{section}
\numberwithin{table}{section}
\newcommand{\numCensus}{11031 }
\newcommand{\numHaken}{2423 }
\newcommand{\numHakenQHS}{2295 }
\newcommand{\numNoDihedralQuotient}{2009 }
\newcommand{\numDihedralQuotient}{286 }
\newcommand{\numWithIdeal}{1581 }
\newcommand{\numWithDimensionComputed}{608 }
\newcommand{\numWithDimensionOne}{29 }
\newcommand{\numWithDimensionZero}{579 }

\makeatletter
\renewenvironment{proof}[1][\proofname]{%
   \par\pushQED{\qed}\normalfont%
   \topsep6\p@\@plus6\p@\relax
   \trivlist\item[\hskip\labelsep\bfseries#1\@addpunct{.}]%
   \ignorespaces
}{%
   \popQED\endtrivlist\@endpefalse
}

\title{Detecting embedded surfaces using finite quotients}
\author{Tam Cheetham-West, Khánh Lê}
\date{Spring 2026}
\address{Department of Mathematics \\ Yale University \\ New Haven, CT, USA}
  \email{tamcheethamwest24@gmail.com}
\address{Department of Mathematical Sciences \\ Korea Advanced Institute of Science and Technology \\ Daejeon, South Korea}
\email{khanh.le.math@gmail.com}
\begin{document}
\pagestyle{plain}

\begin{abstract}
    We give conditions on a Haken hyperbolic rational homology three sphere that imply that any other 3-manifold with profinitely equivalent fundamental group must also be Haken. In the appendix, we show that a regular finite-sheeted cover of an aspherical integral homology three-sphere with positive first Betti number must have first Betti number at least four. We also show that this lower bound is sharp. 
\end{abstract}
\maketitle
\bibliographystyle{alpha}
\tableofcontents
\section{Introduction}
An oriented, irreducible, $\partial-$irreducible 3-manifold is \emph{Haken} if it contains an oriented essential embedded surface. There is a correspondence \cite{StallingsSurface} between essential embedded surfaces in an oriented, irreducible, $\partial-$irreducible 3-manifold $M$ and non-trivial splittings (graph of groups decompositions) of $\pi_1(M)$. Furthermore, non-trivial splittings of $\pi_1(M)$ correspond to non-trivial actions of $\pi_1(M)$ on simplicial trees (without inversions), see \cite[Section 5.3, Theorem 12]{SerreTrees}. 

\medbreak Two finitely-generated residually finite groups $\Gamma$ and $\Delta$ are said to be \emph{profinitely equivalent} if $\Gamma$ and $\Delta$ have the same finite quotients. Similarly, we say that two (compact) 3-manifolds $M$ and $N$ are profinitely equivalent if their fundamental groups $\pi_1(M)$ and $\pi_1(N)$ are profinitely equivalent. A Property $X$ of 3-manifolds is said to be a \emph{profinite property of 3-manifolds} if whenever the fundamental groups $\Gamma=\pi_1(M)$ and $\Delta=\pi_1(N)$ of two (compact) 3-manifolds $M,N$ are profinitely equivalent either $M$ and $N$ both have Property $X$ or $\Gamma$ and $\Delta$ both do not have Property $X$. Some examples of profinite properties of 3-manifolds are fibering \cite{JZ}, having a non-trivial JSJ decomposition \cite{WZ2}, and admitting a Thurston geometry \cite{WZ1}. This paper is concerned with the following question:
\begin{question}\label{hakenquestion}
Is Haken a profinite property of 3-manifolds?
\end{question}
A positive answer to the above question is implied by combining current results in the literature with Bridson-Reid's conjecture that finite-volume hyperbolic 3-manifolds are determined by the finite quotients of their fundamental groups (see \cref{structuralreduction} below, for example). However, it is known that for residually finite groups in general, admitting a non-trivial splitting is not a profinite property \cites{CLRS,bridsonfixedpt}. 

\begin{defn}
    A rational homology 3-sphere (abbreviated to $\Q$H$S^3$) is a compact 3-manifold $M$ for which $H_{*}(M,\Q)\cong H_{*}(S^3,\Q)$. By Poincaré duality, a compact 3-manifold $M$ is a rational homology sphere iff the first Betti number $b_1(M)=0$. 
\end{defn}

In this paper, we show that with additional hypotheses on $M$ a hyperbolic $\Q$H$S^3$, any hyperbolic $\Q$H$S^3$ profinitely equivalent to $M$ is Haken. 
More precisely, we prove:

\begin{thm}\label{thm:mainhakentheorem}
Let $M$ be a Haken, hyperbolic $\Q$H$S^3$ and let $N$ be a 3-manifold with $\widehat{\pi_1(M)}\cong\widehat{\pi_1(N)}$. If one of the following holds:
\begin{enumerate}
    \item $M$ contains an embedded virtual fiber, or
    \item there is a regular finite-sheeted cover $M'\to M$ with a non-trivial class $\alpha\in H_2(M';\Z)$ which is fixed up to sign by the deck group action, or 
    \item $\pi_1(M)$ admits an infinite dihedral quotient, or
    \item $M$ has a curve in its $(\mathrm{P})\SL_2(\F)$-character variety for $\F$ an algebraically closed field, or
    
    \item $\pi_1(M)$ has finitely many conjugacy classes of $\SL_2(\C)$ representations and at least one algebraic $\SL_2(\C)$ representation with non-integral trace.
    
\end{enumerate}
then $N$ is Haken. 
\end{thm}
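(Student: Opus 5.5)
We first reduce to the case where $N$ is a closed hyperbolic $\Q$H$S^3$ (the structural reduction referred to above). Profinite completion detects the first Betti number, since $H_1(\cdot;\Z)$ is the abelianization of the profinite completion when it is finite, so $b_1(N)=0$. Results of Wilton--Zalesskii show that irreducibility, geometry and JSJ decompositions are profinite invariants. Hence $N$ is closed, irreducible and hyperbolic; the non-hyperbolic cases are either Haken by the JSJ/Seifert analysis or excluded by geometry. Since $N$ is closed, irreducible and orientable, it is Haken exactly when it contains an essential embedded surface. Equivalently, we need $\pi_1(N)$ to act nontrivially on a simplicial tree without a global fixed point. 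Our goal in each case is therefore to transport a splitting-type structure of $\pi_1(M)$ across the isomorphism $\widehat{\pi_1(M)}\cong\widehat{\pi_1(N)}$.

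For (1) and (2) we work with finite covers. Finite-index subgroups correspond under the profinite isomorphism, so a regular cover $M'\to M$ gives a regular cover $N'\to N$ with the same deck group $G$. We also get a $G$-equivariant isomorphism $H_1(M';\widehat\Z)\cong H_1(N';\widehat\Z)$, hence $b_1(M')=b_1(N')$. In case (1), we apply Jaikin-Zapirain's profinite detection of fibering to $M'$, and the virtual fiber of $M$ gives a fibered class in $N'$. In case (2) we use $H_2(N')\cong H^1(N')$. The class $\alpha$ spans a $\Z$-submodule on which $G$ acts by a sign character $\chi$. Because $\widehat\Z^\times$-twists are controlled, as in Liu's Galois-equivariance and Wilton--Zalesskii, the corresponding line in $H^1(N';\widehat\Z)$ should be the completion of a rank-one $\chi$-isotypic summand of $H^1(N';\Z)$. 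Let $\beta$ be a primitive integral class in that summand. Then $\beta$ is fixed up to sign by $G$, and the Thurston norm minimizing surface dual to $\beta$ can be chosen $G$-equivariantly, by equivariant norm minimization in the style of Meeks--Yau or Freedman--Hass--Scott. Its image in $N$ is therefore an embedded, possibly one-sided, surface. We then pass to the boundary of a regular neighbourhood when it is one-sided, obtaining an essential surface in $N$. Case (1) reduces to case (2), since a fiber of a virtual fibration that embeds in $M$ produces an equivariant class. I expect \textbf{this step to be the main obstacle}: controlling the $\widehat\Z^\times$-ambiguity, so that the transported class is integral up to a unit and not merely profinite.

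Case (3) is immediate. An infinite dihedral quotient $\pi_1(M)\twoheadrightarrow D_\infty=\Z/2*\Z/2$ is detected by the finite dihedral quotients $D_{2n}$ for all $n$, compatibly in $n$. These determine a surjection $\widehat{\pi_1(N)}\to\widehat{D_\infty}$. We need its restriction to $\pi_1(N)$ to have infinite dihedral image. The index-two subgroup $K$ that it determines has $b_1(K)>0$ in $M$, and hence also in $N$ by the Betti-number argument above. That same Betti-number argument, applied to the character, shows the index-two cover of $N$ has an anti-invariant class, so $N$ has an infinite dihedral quotient. Acting on the Bass--Serre tree of $\Z/2*\Z/2$ then gives a splitting of $\pi_1(N)$.

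For (4) and (5) we use Culler--Shalen theory. A curve in the character variety over $\F$ is encoded by finite quotients: by reduction mod primes and the Chebotarev-type arguments in work on profinite rigidity of $\SL_2$-representations, the finite groups $\SL_2(\F_q)$ and their quotients detect the dimension of the character scheme. We would therefore show that the character varieties of $M$ and $N$ over $\overline{\F}_p$ have the same dimension for almost all $p$. A curve for $N$ then yields an ideal point, and hence a valuation giving a nontrivial tree action. In case (5), a representation with non-integral trace gives a $p$-adic valuation with $v(\operatorname{tr}\rho(\gamma))<0$, and so an action of $\pi_1(M)$ on the Bruhat--Tits tree of $\SL_2(\Q_p)$ without a fixed point. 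Finiteness of conjugacy classes lets us match the representation varieties of $M$ and $N$ as finite schemes. Concretely, the $p$-adic points are limits of points over $\SL_2(\Z/p^k)$, and these are counted by finite quotients. This transfers an unbounded $p$-adic representation to $N$, which gives a nontrivial Bruhat--Tits tree action and hence an essential surface in $N$.
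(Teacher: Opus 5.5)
The genuine gap is in (5), your treatment of non-integral traces. Cases (1)–(3) are fine, and two of your worries there are unnecessary:
\begin{itemize}
\item In (3), $b_1(N)=0$ and inflation--restriction give $H^1(N_K;\Z)^{\Z/2}=0$. So every class on the double cover is automatically anti-invariant, and $b_1(N_K)=b_1(M_K)>0$ alone gives a $D_\infty$ quotient.
\item In (2), the $\widehat{\Z}^\times$-ambiguity is harmless. The $G$-equivariant isomorphism $H^1(N';\Z)\otimes\widehat{\Z}\cong H^1(M';\Z)\otimes\widehat{\Z}$ preserves $\chi$-isotypic parts, and $(L\otimes\widehat{\Z})^\chi=L^\chi\otimes\widehat{\Z}$ by flatness. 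The paper instead gets a full $\Z G$-isomorphism from Liu's $\widehat{\Z}^\times$-regularity.
\end{itemize}
For (4) over $\C$, your route differs from the paper's. You would spread the curve out to positive-dimensional mod-$p$ components, transfer these finite-image representations, and then apply Garden--Tillmann's characteristic-$p$ Culler--Shalen theory. The paper instead makes $n+1$ points of the curve simultaneously bounded in $\SL_2(\overline{\Q}_p)$ and transports them through the completion. Your route can work, but the spreading-out step must actually be written down.

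In (5), your transfer mechanism fails exactly on the representation you want to move. A representation into $\SL_2(\overline{\Q}_p)$ with a trace of negative valuation is precisely one that is \emph{not} conjugate into $\SL_2(\mathcal{O})$. It therefore has no reduction modulo $\mathfrak{p}^k$, is not a limit of $\SL_2(\Z/p^k)$-points, and does not extend continuously to $\widehat{\pi_1(M)}$. Finite quotients see only bounded representations (\cite[Lemma 4.3]{BMRS1}). Your assertion that the character varieties of $M$ and $N$ match ``as finite schemes'' is doing all the work, and you give no argument for it; nothing of that strength is available.

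The missing idea is a counting argument, which is how the paper proceeds.
\begin{enumerate}
\item[(a)] When $X(\Gamma,\C)$ is finite, $|X(\Gamma,\C)|$ is a profinite invariant. Any finitely many $\SL_2(\C)$-representations become simultaneously bounded under a suitable field isomorphism $\C\cong\overline{\Q}_p$ (\cite[Lemma 4.2]{BMRS1}), and can then be transported through $\widehat\Gamma$. This also shows $X(\pi_1(N),\C)$ is finite, so the traces for $N$ are algebraic.
\item[(b)] For every $p$, $|X_b(\Gamma,\overline{\Q}_p)|=|X_c(\widehat\Gamma,\overline{\Q}_p)|$ is a profinite invariant.
\item[(c)] Some trace is non-integral if and only if $|X_b(\Gamma,\overline{\Q}_p)|<|X(\Gamma,\C)|$ for some $p$.
\end{enumerate}
Non-integrality then passes to $\pi_1(N)$ by subtraction, even though no individual unbounded representation is transported. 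Bass's theorem \cite[Theorem 5.2.7]{MaclachlanReidBook} then gives the nontrivial tree action, so $N$ is Haken.
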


Each condition in the statement of \cref{thm:mainhakentheorem} implies that $M$ is Haken. The proof of \cref{thm:mainhakentheorem} we offer involves showing that the collection of finite quotient of $\pi_1(M)$ detects all of the above conditions, and so it follows that $N$ must also be Haken.

\medbreak\noindent {\bf Organization of the paper.} In \cref{section:prelprofinite}, we record some facts about profinite completions of discrete groups that are used in the paper. We then discuss 3-manifolds in \cref{3mfldsprel}, highlighting existing results about the detection of essential surfaces and showing how \cref{hakenquestion} reduces to the case of hyperbolic $\Q $H$S^3$s. The next two sections form the core of the paper; in \cref{mainproof} we prove \cref{thm:mainhakentheorem} and we present examples of hyperbolic $\Q $H$S^3$s to which \cref{thm:mainhakentheorem} applies in \cref{section:examples}. We conclude in Section~\ref{conclusion} with an observation and question about the general case of \cref{hakenquestion}. We record more thoughts around Property $\mathcal{H}$ (introduced in \cref{mainproof}) in the Appendix. 

\medbreak\noindent {\bf Acknowledgments.} The authors thank Danny Ruberman, Ryan Spitler, and Matthew Stover for helpful conversations. The authors thank Xiaoyu Xu and Youheng Yao for their comments on the first draft of this paper. The authors are grateful for the mentorship and support of Alan Reid, whose questions and encouragement inspired this project from its inception. Part of this work appeared in the first author's Ph.D. thesis \cite{Cthesis}. 

\section{Preliminaries on profinite completions}\label{section:prelprofinite}

\subsection{Generalities}
The book \cite{RibesZalesskiiBook} is a standard reference for everything in this section. The profinite completion $\widehat{\Gamma}$ of a countable group $\Gamma$ is the inverse limit of the inverse system of its finite quotients $\Gamma/N$ for all finite-index normal subgroups $N\triangleleft\Gamma$ indexed by reverse-inclusion. The group $\widehat{\Gamma}$ is compact and totally-disconnected. It can be viewed as inheriting its topology from the product topology on $\prod_{N\in\mathcal{N}}\Gamma/N$ where $\mathcal{N}$ here denotes the set of all finite-index normal subgroups of $\Gamma$. There is a canonical group homomorphism $\Gamma\to\widehat{\Gamma}$ with dense image which is injective exactly when $\Gamma$ is residually finite. 

\medbreak The canonical map $\Gamma\to\widehat{\Gamma}$ gives a bijective correspondence between the finite index subgroups of $\Gamma$ and the open subgroups of $\widehat{\Gamma}$ that respects the lattice structure on finite-index subgroups and the conjugation action on finite-index subgroups, see \cite[Proposition 3.2.2]{RibesZalesskiiBook}. When $\Gamma$ is finitely generated, the group $\widehat{\Gamma}$ is {\it topologically} finitely generated (has a dense finitely generated subgroup). 
\begin{prop}[{\cite{DFPR},\cite[Corollary 3.2.8]{RibesZalesskiiBook}}]
    For finitely generated groups $\Gamma,\Delta$, $\widehat{\Gamma}\cong\widehat{\Delta}$ iff $\Gamma$ and $\Delta$ have the same set of finite quotients. 
    \begin{proof}
        
   We include the proof given in \cite{RibesZalesskiiBook} for completeness. Assume $\Gamma$ and $\Delta$ have the same set of (isomorphism classes of) finite quotients. Let $K_n$ be the intersection of all subgroups of index $\leq n$ in $\Gamma$, and let $L_n$ be the intersection of all subgroups of index $\leq n$ in $\Delta$. We can check that $\cap_nK_n=\{1\}$ and $\cap_n L_n=\{1\}$ and that the $\{K_n\}_{n\in\mathbb N},\{L_n\}_{n\in\mathbb N}$ form a nested sequence of characteristic subgroups that are {\it cofinal} in the sense of Lemma 1.1.9 of \cite{RibesZalesskiiBook} so that $\widehat{\Gamma}\cong\varprojlim_n\Gamma/K_n$ and $\widehat{\Delta}\cong\varprojlim_n\Delta/L_n$.
    \medbreak\noindent
    We first show that $\Gamma/K_n\cong\Delta/L_n$. To see this, we observe that $\Gamma/K_n$ is a finite quotient of $\Delta$ because $\Gamma$ and $\Delta$ have the same set of (isomorphism classes of) finite quotients. Then, we see that the kernel of the map $\Delta\twoheadrightarrow \Gamma/K_n$ will be a finite index normal subgroup of $\Delta$ containing $L_n$. In particular, there is an epimorphism $\Delta/L_n\twoheadrightarrow \Gamma/K_n$. If we repeat this process beginning with $\Delta/L_n$, we also obtain an epimorphism $\Gamma/K_n\twoheadrightarrow\Delta/L_n$ and so $\Gamma/K_n\cong\Delta/L_n$ for all $n\in\mathbb{N}$. 
    \medbreak\noindent Next, for every $n\in\N$, we form the set $X_n$ of isomorphisms between $\Gamma/K_n$ and $\Delta/L_n$. The collection of spaces $\{X_n\}_{n\in\N}$ has the structure of an inverse system of finite sets with maps $\Phi_{mn}:X_m\to X_n$ for all $m\geq n$ given as follows: for an isomorphism $\phi\in X_m$, $\phi(K_n)=L_n$, and so $\phi$ induces an isomorphism $\psi:\Gamma/K_n\to\Delta/L_n$, and we set $\Phi_{mn}(\phi)=\psi$. We then apply Proposition 1.1.4 of \cite{RibesZalesskiiBook} to see that $\varprojlim_nX_n$ is non-empty, and any element $\alpha\in\varprojlim_nX_n$ will give an isomorphism between $\varprojlim_n\Gamma/K_n$ and $\varprojlim_n\Delta/L_n$. Since $\widehat{\Gamma}\cong\varprojlim_n\Gamma/K_n$ and $\widehat{\Delta}\cong\varprojlim_n\Delta/L_n$, $\widehat{\Gamma}\cong\widehat{\Delta}$.
    \medbreak\noindent The other direction is given by the correspondence between finite index subgroups of a group and open subgroups of the profinite completion, see \cite[Proposition 3.2.2]{RibesZalesskiiBook}).
    \end{proof}
    
\end{prop}
Taking the profinite completion of a group is functorial and epimorphisms of finitely generated groups $G\twoheadrightarrow H$ can be completed to obtain continuous epimorphisms $\widehat{G}\twoheadrightarrow\widehat{H}$. A deep theorem of Nikolov-Segal \cite{ns} is that every finite-index subgroup of a finitely generated profinite group is open, equivalently, that every abstract homomorphism between finitely generated profinite groups is continuous.

\begin{lemma}
    For $\Gamma$ a finitely generated, residually finite group and $\Delta$ a finitely generated residually finite group with $\widehat{\Gamma}\cong\widehat{\Delta}$, $\Gamma_{ab}\cong\Delta_{ab}$.
    \begin{proof}
    Every finite abelian quotient of $\Gamma$ and $\Delta$ factors through their resepctive abelianizations. Thus, $\Gamma_{ab}$ and $\Delta_{ab}$ are finitely generated abelian groups with the same finite quotients. By the fundamental theorem of finitely generated abelian groups, $\Gamma_{ab}\cong\Z^n\oplus T$ and $\Delta_{ab}\cong\Z^m\oplus T'$ for $T,T'$ finite abelian groups. Choose a prime $p$ larger than the order of all torsion in $T,T'$. The largest finite quotient of $\Gamma_{ab}$ of the form $(\Z/p\Z)^k$ is $k=m$ and likewise the largest finite quotient of $\Delta_{ab}$ of the form $(\Z/p\Z)^k$ is $k=n$. Similarly, we can show that every summand of the primary decomposition of $T$ occurs in $T'$ and vice versa.
    \end{proof}
\end{lemma}
\begin{cor}
    \label{cor:profinitecompletiondetectsH1}
    For $\Gamma\cong\pi_1(M)$ and $\Delta\cong\pi_1(N)$, if $\widehat{\Gamma}\cong\widehat{\Delta}$ then $H_1(M;\Z)\cong H_1(N;\Z)$.
    \begin{proof}
    The first integral homology group is the abelianization of the fundamental group, see \cite[Theorem 2A.1]{hatcherAT}.  
    \end{proof}
\end{cor}

\subsection{The case of 3-manifold groups}

We conclude this section by summarizing some well-known results about the profinite completions of 3-manifold groups following \cite{reid2018profinite} as well as the recent result of Liu from \cite{LiuAlmostProfiniteRigidity} that we will use. We will also define some terminologies. 



\begin{defn}
    The profinite genus of a fixed group $\Gamma$ (resp. 3-manifold $M$) is the set of all groups (resp. 3-manifolds) profinitely equivalent to $\Gamma$ (resp. $M$). 
\end{defn}
Kneser and Milnor \cite{MilnorS2} prove that every closed 3-manifold $M$ has a unique (up to reordering) connect sum ({\it prime} or {\it Kneser-Milnor}) decomposition along embedded (reducing) 2-spheres into irreducible pieces (pieces in which every embedded 2-sphere bounds a 3-ball on one side) and copies of $(S^1\times S^2)$. Jaco-Shalen \cite{JS} and independently Johansson \cite{JSJ} further prove that every closed irreducible 3-manifold admits a canonical (JSJ) decomposition along essential embedded tori into pieces that are either Seifert-fibered or {\it atoroidal} i.e. not containing any essential embedded tori. Cornerstone results of Wilton-Zalesskii \cite{WZ2} establish that the profinite completions of the fundamental group of a closed 3-manifold $M$ determine the prime decomposition and the JSJ decomposition of $M$ when $M$ is irreducible. In particular,
 \begin{theorem}[{\cite[Theorem A ]{WZ2}}]
 Let $M$ and $N$ be closed, oriented 3-manifolds with $\widehat{\pi_1(M)}\cong\widehat{\pi_1(N)}$. If $M$ has a Kneser-Milnor decomposition $M\cong M_1\#  \dots \#M_n \#^r(S^1\times S^2)$ then $N$ has a Kneser-Milnor decomposition $N\cong N_1\#\dots \# N_n\#^r(S^1\times S^2)$ with $\widehat{\pi_1(M_i)}\cong\widehat{\pi_1(N_i)}$ for $i=1,\dots,n$. 
 \end{theorem}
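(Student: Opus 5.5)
The statement immediately above is Theorem A of Wilton–Zalesskii, which the paper quotes rather than proves. Still, here is how I would approach it. The guiding idea is that both the prime decomposition of $M$ and the number $r$ of $S^1\times S^2$ summands can be read off from the group-theoretic free product structure of $\pi_1(M)$. The task is then to show that this structure is detected by $\widehat{\pi_1(M)}$ through its action on a profinite tree.

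By Kneser–Milnor and van Kampen, we have $\pi_1(M)\cong \pi_1(M_1)*\dots*\pi_1(M_n)*F_r$. Here each $\pi_1(M_i)$ is freely indecomposable and not infinite cyclic, apart from the caveat about summands with finite $\pi_1$ discussed below. Completing gives a profinite free product
\[
\widehat{\pi_1(M)}\cong \widehat{\pi_1(M_1)}\amalg\dots\amalg\widehat{\pi_1(M_n)}\amalg \widehat{F_r}.
\]
This identity uses the fact that 3-manifold groups are residually finite, so the profinite topology is efficient on the free product. The same formula holds for $N$ with its own pieces $N_j$ and its own rank $r'$.

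The key step is a profinite Kurosh-type uniqueness statement. The natural approach is to let $\widehat{\pi_1(M)}$ act on the profinite Bass–Serre tree $T$ of the decomposition for $N$, transported along the given isomorphism. Each factor $\widehat{\pi_1(M_i)}$ would then have to fix a vertex of $T$, and hence be conjugate into some $\widehat{\pi_1(N_j)}$. By symmetry, and since a conjugate of a factor contained in a factor forces equality in a profinite free product, we would obtain a bijection $i\mapsto j$ with $\widehat{\pi_1(M_i)}\cong\widehat{\pi_1(N_j)}$.

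This is the main obstacle. Profinite groups acting on profinite trees need not fix a vertex. One must first know that each $\widehat{\pi_1(M_i)}$ is not itself a nontrivial profinite free product, and that it is not virtually a free pro-$\widehat{\Z}$ product. For aspherical irreducible pieces, I would try to prove this via cohomological methods. Goodness of 3-manifold groups gives that $\widehat{\pi_1(M_i)}$ has cohomological dimension 3 and is a profinite $PD^3$ group. Such a group cannot act on a profinite tree without a global fixed point, because a nontrivial splitting would force cohomology in the wrong degree, through a Mayer–Vietoris sequence with edge groups of dimension at most 1. Finite $\pi_1$ pieces are handled separately, since finite groups always fix a vertex.

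The count $r=r'$ then follows from $H_1$ via \cref{cor:profinitecompletiondetectsH1}. Once the $M_i$ and $N_j$ are matched, they contribute equal $b_1$ by the same corollary, so the remaining free ranks are equal. Alternatively, one can use the rank of the free pro-$\widehat{\Z}$ factor of the quotient graph of groups.
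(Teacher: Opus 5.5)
The paper gives no proof of this statement; it is quoted from \cite{WZ2}, so your proposal can only be judged on its own terms. The surrounding steps are fine. The completion of a free product is the free profinite product of the completions; this is formal from the universal property and does not need residual finiteness. Malnormality of free factors in a profinite free product turns ``every factor fixes a vertex of the other tree'' into a bijection of factors. The count $r=r'$ then follows from $b_1$ via \cref{cor:profinitecompletiondetectsH1}. Finite factors are also fine, since finite groups fix a vertex of a profinite tree. The gap is exactly the step you flag: that $\widehat{\pi_1(M_i)}$ fixes a vertex when $M_i$ is aspherical. The justification you sketch does not work as written.

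\emph{First problem: the claim is stated too broadly.} The assertion that a profinite $PD^3$ group cannot act on a profinite tree without a global fixed point is false. If $b_1(M_i)>0$, then $\widehat{\pi_1(M_i)}\twoheadrightarrow\widehat{\Z}$, and $\widehat{\Z}$ acts freely on its profinite Cayley tree. So only actions with trivial edge stabilizers can be handled, and those are the only ones you need.

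\emph{Second problem: the Mayer--Vietoris step is missing its key input.} Even with trivial edge stabilizers, a Mayer--Vietoris argument only yields a contradiction if the vertex stabilizers have vanishing $H^3(-;\mathbb{F}_p)$. In the discrete setting this is Strebel's theorem. In the profinite setting, infinite index does not lower $p$-cohomological dimension: $\Z_p^3$ is a closed subgroup of infinite index in $\widehat{\Z}^3=\widehat{\pi_1(T^3)}$, yet $H^3(\Z_p^3;\mathbb{F}_p)\cong\mathbb{F}_p$. Moreover, by the Zalesskii--Melnikov theorem, a closed subgroup acting with trivial edge stabilizers is a free product of vertex stabilizers indexed by a profinite space together with a projective factor. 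It is not a finite graph of groups, so ``the'' Mayer--Vietoris sequence is not the one you have in mind.

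\emph{One way to close the gap.} By the Zalesskii--Melnikov theorem, if $H=\widehat{\pi_1(M_i)}$ fixes no vertex, there are two cases.
\begin{enumerate}
\item $H$ is projective. This is excluded by $cd_p H=3$. ``Projective'' is the case you actually need to rule out, not ``virtually a free pro-$\widehat{\Z}$ product''.
\item $H=A\amalg B$ with $A,B$ nontrivial closed subgroups. Choose an open normal $U$ with $UA\neq H\neq UB$, so both indices below are at least $2$. The Kurosh theorem for open subgroups, additivity of $H^3$ over free products, and the vanishing of $H^3$ for free profinite groups give
\[ \dim H^3(U;\mathbb{F}_p)=[H:UA]\,\dim H^3(U\cap A;\mathbb{F}_p)+[H:UB]\,\dim H^3(U\cap B;\mathbb{F}_p)\neq 1. \]
But goodness gives $H^3(U;\mathbb{F}_p)\cong H^3(M_U;\mathbb{F}_p)\cong\mathbb{F}_p$ for the corresponding closed orientable aspherical cover $M_U\to M_i$, a contradiction.
\end{enumerate}
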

 \begin{theorem}[{\cite[Theorem B]{WZ2}}]
   Let $M$ and $N$ be closed, oriented, irreducible 3-manifolds with $\widehat{\pi_1(M)}\cong\widehat{\pi_1(N)}$. The underlying graphs of the JSJ decompositions of $\pi_1(M)$ and $\pi_1(N)$ are isomorphic and corresponding vertex groups have isomorphic profinite completions. 
 \end{theorem}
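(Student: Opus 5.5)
The plan is to run a profinite version of the uniqueness argument for JSJ decompositions, via Bass--Serre theory for profinite groups acting on profinite trees. First I would show that the JSJ graph of groups $\mathcal{G}_M$ of $\pi_1(M)$ is \emph{efficient}: each vertex group (the fundamental group of a Seifert-fibered or atoroidal piece) and each edge group ($\Z^2$ from a JSJ torus) is separable in $\pi_1(M)$, and $\pi_1(M)$ induces the full profinite topology on each of them. For this I would use the residual finiteness and subgroup separability available for $3$-manifold groups: LERF for Seifert-fibered pieces, virtual specialness for hyperbolic and mixed manifolds, and Hempel's argument for graph manifolds. Efficiency implies that $\widehat{\pi_1(M)}$ is the fundamental group of the profinite graph of groups $\widehat{\mathcal{G}}_M$ with the same underlying graph and vertex/edge groups $\widehat{\pi_1(M_v)}$, $\widehat{\Z}^2$. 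It therefore acts on a profinite tree $T_M$ with these stabilizers. The same holds for $N$, giving $T_N$.

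Next, fix an isomorphism $\Phi:\widehat{\pi_1(M)}\to\widehat{\pi_1(N)}$, which is continuous by Nikolov--Segal. Then $\widehat{\pi_1(N)}$ acts on both $T_M$ (via $\Phi^{-1}$) and $T_N$. The core step is to show that every vertex stabilizer of one tree fixes a vertex of the other. For this I would use the profinite fixed-point criteria of Zalesskii--Mel'nikov type: a profinite group acting on a profinite tree either fixes a vertex, or has a nontrivial profinite free product or HNN-type structure, or contains a non-abelian free pro-$p$ subgroup acting freely. One then needs intrinsic properties of the vertex groups that rule out the latter two options relative to their peripheral $\widehat{\Z}^2$'s.
\begin{itemize}
\item For Seifert-fibered pieces, I would use the (virtually) central procyclic fiber subgroup.
\item For hyperbolic pieces, I would use the malnormality of peripheral subgroups, transported to the profinite setting (the acylindricity of the JSJ action passes to $T_M$), together with the fact that the completions of atoroidal pieces admit no profinite splitting over $\widehat{\Z}^2$ relative to the boundary.
\end{itemize}
Symmetrically, edge stabilizers must be edge stabilizers. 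The resulting elliptic-elliptic correspondence then yields a $\widehat{\pi_1(N)}$-equivariant correspondence between the vertex sets modulo the actions, exactly as in the discrete uniqueness of JSJ. That identifies the quotient graphs, which are the underlying JSJ graphs, and matches vertex groups up to conjugacy, giving $\widehat{\pi_1(M_v)}\cong\widehat{\pi_1(N_{v'})}$.

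Finally, one must check that no collapsing or refinement occurs, i.e.\ that the bijection on vertices really preserves edges. For this I would use that adjacent JSJ pieces cannot be merged: two Seifert pieces glued along a torus do not have matching fibers, which is detected by the failure of the centralizers of their fiber subgroups to agree in the completion. The main obstacle I expect is the ellipticity step for hyperbolic (atoroidal) vertex groups, because it requires a genuinely profinite replacement for acylindricity and for the fact that hyperbolic pieces do not split over $\widehat{\Z}^2$. My first attempt would be to prove malnormality of $\widehat{\Z}^2$ in the completion of a cusped hyperbolic piece via virtual specialness (cubulation and the separability of double cosets), and then apply a profinite Bass--Serre argument to the action on $T_N$.
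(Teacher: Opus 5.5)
The paper gives no proof of this statement: it is quoted as Theorem B of \cite{WZ2}, so there is no in-paper argument to compare yours with. Your architecture is the natural framework. It consists of efficiency of the JSJ splitting, the profinite trees $T_M$ and $T_N$, acylindricity, and mutual ellipticity of vertex stabilizers. Your treatment of $\widehat{\Z}^2$ subgroups, and of Seifert pieces via the normal procyclic fibre, is plausible. The genuine gap is the step you flag yourself: showing that the stabilizer $H\cong\widehat{\pi_1(N_w)}$ of a hyperbolic vertex of $T_N$ fixes a vertex of $T_M$. The mechanism you propose cannot deliver this.

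First, the fixed-point trichotomy you quote is not available in that form. The Zalesskii-type result gives three options for a profinite group acting on a profinite tree: it fixes a vertex, or it contains a non-abelian free pro-$p$ subgroup acting freely, or it acts on an invariant profinite line through a soluble quotient of a very restricted kind. There is no option of the form ``splits as a profinite amalgam or HNN extension.'' Profinite Bass--Serre theory gives no structure theorem for a fixed-point-free action of a closed subgroup such as $H$, whose quotient $H\backslash T_M$ is typically an infinite profinite graph. So even if you proved that completions of atoroidal pieces admit no profinite splitting, that would not yield a fixed vertex.

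The genuine criterion works for $\widehat{\Z}^2$ and for the fibre subgroup because those groups have no free pro-$p$ subgroups. It gives nothing for $H$, which always contains non-abelian free pro-$p$ subgroups. Even granting your version, whether such a subgroup acts freely on $T_M$ is a property of the action. No intrinsic property of $H$ relative to its cusps can exclude it.

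Second, the ``fact'' that completions of atoroidal pieces admit no profinite splitting over $\widehat{\Z}^2$ relative to the boundary is essentially what has to be proved, not an available input. Splitting is not in general a profinite property (\cite{CLRS}, \cite{bridsonfixedpt}, as recalled in this paper's introduction). So the discrete JSJ theory of $N_w$ says nothing directly about how $\widehat{\pi_1(N_w)}$ acts on profinite trees.

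Third, malnormality of the cusp closures (via double-coset separability) gives acylindricity of $T_M$, which bounds stabilizers of long paths. Acylindricity never forces ellipticity: a free group acts freely, hence acylindrically, on its Cayley tree. What is missing is a genuinely profinite input controlling how completions of cusped hyperbolic pieces can act on profinite trees with abelian edge stabilizers. Any proof along these lines must supply this.

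A secondary point: mutual ellipticity of vertex stabilizers does not by itself give an isomorphism of trees. Already for discrete groups, a deformation space contains many non-isomorphic reduced trees. You need each vertex stabilizer to fix a unique vertex of its own tree and of the other one; that is what forces adjacent vertex groups of $T_N$ to fix distinct vertices of $T_M$. You also need the resulting vertex correspondence to preserve adjacency. ``Edge stabilizers go to edge stabilizers'' does not follow by symmetry, since Seifert vertex groups contain many $\widehat{\Z}^2$ subgroups that are not edge stabilizers. Your fibre-centralizer remark only addresses Seifert--Seifert adjacencies.
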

 \medbreak\noindent 
The fundamental group $\Gamma=\pi_1(M)$ of a closed hyperbolic 3-manifold $M$ is a {\it uniform} lattice in $\PSL_2(\C)\cong$Isom$^+(\Hh^3)$, and by Mostow rigidity there is a unique discrete faithful representation of $\Gamma$ into $\PSL_2(\C)$ (up to conjugation). It is a useful consequence of the Virtual Special Theorem \cite{AgolHaken} that every hyperbolic $\Q$H$S^3$ has lots of finite-sheeted (regular) covers with positive $b_1$. To such covers, we will apply the important $\widehat{\Z}^\times-$regularity theorem of Liu \cite{LiuAlmostProfiniteRigidity}; a key ingredient used in the proof of the hyperbolic profinite almost rigidity theorem, \cite[Theorem 1.1]{LiuAlmostProfiniteRigidity}.

\begin{theorem}[{\cite[Theorem 1.2]{LiuAlmostProfiniteRigidity}}]
\label{thm:zhatregularity}
    Let $M$ and $N$ be finite-volume hyperbolic 3-manifolds. For any isomorphism $\Phi:\widehat{\pi_1(M)}\to\widehat{\pi_1(N)}$, let $\Phi_*:\widehat{H}_1(\pi_1(M),\Z)_{free}\to \widehat{H}_1(\pi_1(N),\Z)_{free}$ be the isomorphism induced on the profinite completion of the free part of $H_1(\pi_1(M),\Z)$ and $H_1(\pi_1(N),\Z)$ respectively. There is a unit $\mu\in\widehat{\Z}^\times$ and a module isomorphism $$s:H_1(\pi_1(M),\Z)_{free}\to H_1(\pi_1(N),\Z)_{free}$$ with completion $\widehat{s}:\widehat{H}_1(\pi_1(M),\Z)_{free}\to \widehat{H}_1(\pi_1(N),\Z)_{free}$ such that $\Phi_*=m_\mu\circ\widehat{s}$ where $m_\mu$ is left multiplication by $\mu$ on $\widehat{H}_1(\pi_1(N),\Z)_{free}$. 
\end{theorem}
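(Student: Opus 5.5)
The statement above, \cref{thm:zhatregularity}, is quoted from Liu; the following is a plan for how I would try to reprove it, not the argument given in \cite{LiuAlmostProfiniteRigidity}. Write $H=H_1(\pi_1(M),\Z)_{free}\cong\Z^b$ and $H'=H_1(\pi_1(N),\Z)_{free}\cong\Z^b$; the ranks agree by \cref{cor:profinitecompletiondetectsH1}. Then $\Phi_*$ is an element of $\GL_b(\widehat{\Z})$ once bases are fixed. The goal is a unit $\mu\in\widehat{\Z}^\times$ with $\mu^{-1}\Phi_*\in \GL_b(\Z)$. When $b=0$ there is nothing to prove, and when $b=1$ the claim is automatic because $\GL_1(\widehat{\Z})=\widehat{\Z}^\times$. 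So the content is for $b\geq 2$, and I would split it into a rigidity statement for individual directions followed by linear algebra.

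\emph{Step 1 (directions).} Dually, consider $\widehat{H}^1=\mathrm{Hom}(\widehat{H},\widehat{\Z})$. I want to show that for every primitive integral class $\phi\in H^1(N;\Z)$, the pullback $\Phi^*\phi$ equals $\lambda\psi$ for some unit $\lambda\in\widehat{\Z}^\times$ and some primitive $\psi\in H^1(M;\Z)$. The approach is to attach to any $\widehat{\Z}$-class $\xi$ an invariant computable from finite quotients alone. For each $n$, the class $\xi$ gives a map $\pi_1\to\Z/n$, and I would record the homology (or Reidemeister torsion over finite fields) of the associated cyclic covers. These data form a compatible system that is preserved by $\Phi$, because $\Phi$ matches finite-index subgroups and their abelianizations. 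For a genuine integral class $\phi$, this system is governed by a single Alexander polynomial $\Delta_\phi\in\Z[t^{\pm1}]$. The homology of the $\Z/n$-covers is then controlled by resultants of $\Delta_\phi$ with $t^n-1$. I would arrange $\Delta_\phi\neq 0$ by passing to twisted versions, or to a finite cover via virtual fibering \cite{AgolHaken}, and choose the twisting so that $\Delta_\phi$ has a root that is not a root of unity.

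The key claim is then: a $\widehat{\Z}$-class whose cyclic-cover data is "polynomial" in this sense must be a $\widehat{\Z}^\times$-multiple of an integral class. To prove it, I would compare torsion growth along the subsequences $n=p^k$ prime by prime. A generic $\widehat{\Z}$-direction gives incompatible $p$-adic behaviour across the different primes $p$. Those prime-by-prime obstructions can only be reconciled if the $p$-adic components of $\xi$ are all proportional to a common rational vector, with the proportionality constants assembling into a unit of $\widehat{\Z}$.

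\emph{Step 2 (linear algebra).} Suppose $A=\Phi^*\in\GL_b(\widehat{\Z})$ sends every primitive integral vector to a $\widehat{\Z}^\times$-multiple of a primitive integral vector. Apply this to the basis vectors $e_1,\dots,e_b$ and to the sums $e_i+e_j$. This forces the unit scalars attached to $e_i$ and $e_j$ to agree, so after rescaling by a single $\mu^{-1}$ the images of all basis vectors are integral. Hence $\mu^{-1}A\in M_b(\Z)\cap\GL_b(\widehat{\Z})=\GL_b(\Z)$, and this matrix defines $s$. Here one must check that the unit attached to each basis vector is well defined. This holds because $\widehat{\Z}^\times$ acts freely on primitive vectors of $\widehat{\Z}^b$.

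\emph{Main obstacle.} I expect Step 1 to be the real difficulty: it is essentially Liu's argument. The hard point is showing that profinite cyclic-cover data distinguishes genuinely integral directions from exotic $\widehat{\Z}$-directions. I would first try twisted torsions through a fixed finite quotient chosen so that $\Delta_\phi$ has a root off the unit circle. Hyperbolicity of $M$ and $N$ is what should guarantee that such twisted polynomials are nontrivial, and that their Mahler-measure-type growth rates pin down the direction.
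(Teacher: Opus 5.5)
The paper gives no proof of this statement. It is imported verbatim from \cite{LiuAlmostProfiniteRigidity}, so everything rests on your Step~1, and there the plan has a genuine gap. Step~2 is fine: from $\lambda_iv_i+\lambda_jv_j=\lambda_{ij}v_{ij}$ with $v_i,v_j$ integral and independent, Cramer's rule gives $\lambda_i/\lambda_{ij}\in\widehat{\Z}^\times\cap\Q=\{\pm1\}$.

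The key claim of Step~1 is only asserted, and the mechanism you sketch for it cannot work as stated. Anything computed from the tower $n=p^k$ depends only on the $p$-adic component $\xi_p$, since $\xi \bmod p^k=\xi_p \bmod p^k$. Suppose $M$ has a homeomorphism $\sigma$ and a primitive $\psi\in H^1(M;\Z)$ with $S\psi\neq\pm\psi$, where $S=\sigma^*$; such hyperbolic manifolds are plentiful. Set $\xi_p=\psi$ for $p$ in a nonempty proper set of primes and $\xi_p=S\psi$ otherwise. This $\xi$ is not a $\widehat{\Z}^\times$-multiple of an integral class. Yet along every tower $n=p^k$ its cyclic covers are literally those of $\psi$ or of $S\psi$, and $\sigma$ makes these homeomorphic. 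So their homology, and any twisted data for a $\sigma$-invariant twisting, agree at every prime with the data of the single integral class $\psi$. The ``incompatible $p$-adic behaviour across primes'' never occurs, and an argument using only the towers $n=p^k$ would wrongly certify this exotic class as regular. Only levels $n$ with several prime factors, or an invariant defined over all of $\widehat{\Z}$ at once, can see the difference. Separately, torsion growth rates are Mahler-measure invariants, one real number per direction, which is far too coarse to pin down a $\widehat{\Z}$-direction.

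Second, hyperbolicity is used in the wrong place. Polynomial cyclic-cover data together with a non-cyclotomic root does not force regularity. Suppose the multivariable (twisted) polynomial you work with were $f(x)$ in coordinates $(x,y)$, say $f=x^2-3x+1$. Then along $\xi=(1,\beta)$ the level-$n$ modules are $\Z[s]/(s^n-1,f(s))$ for every $\beta\in\widehat{\Z}$ and every $n$, yet $\xi$ is exotic whenever $\beta\notin\Z$. Regularity itself genuinely fails without hyperbolicity. For $\Sigma_g\times S^1$ with $g\geq1$, the automorphism $\mathrm{id}\times\kappa$ of $\widehat{\pi_1\Sigma_g}\times\widehat{\Z}$, with $\kappa\in\widehat{\Z}^\times\setminus\{\pm1\}$, induces $\mathrm{diag}(1,\dots,1,\kappa)$ on $\widehat{H}_1$.

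So a proof needs two ingredients:
\begin{enumerate}
\item A nondegeneracy property specific to hyperbolic manifolds. For instance, their Thurston norm is a genuine norm, and twisted Alexander polynomials of finite-quotient representations detect it (Friedl--Vidussi, via \cite{AgolHaken}).
\item An invariant, matched by $\Phi$ at all finite levels simultaneously, that is fine enough to see the full Newton polytope, not just orders of homology along one fixed class.
\end{enumerate}
Neither ingredient appears in your plan, and this missing step is where all the work in Liu's theorem lies.
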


An embedded essential surface $\Sigma$ in a closed Haken hyperbolic 3-manifold $M$ is one of two types. A {\it geometrically infinite} surface $\Sigma$ is a {\it virtual fiber}, a surface that will lift to the fiber $\Sigma'$ of a fibration $\Sigma'\hookrightarrow M'\to S^1$ in a finite-sheeted cover $M'\to M$, while a {\it geometrically finite} surface $\Sigma$ will be quasiFuchsian, quasiconformally conjugate to a lattice in $\PSL_2(\R)<\,\,\PSL(\C)$. One way to see the dichotomy is to consider the {\it limit set} (the set of accumulation points of an orbit of a point $x\in\Hh^3$ under the isometric action) in $\partial \Hh^3\cong S^2$ of a representative subgroup in $\PSL_2(\C)$ of a surface in the hyperbolic 3-manifold. A virtual fiber will have full limit set $S^2$ while the limit set of the quasiFuchsian surface will be a topological circle. 

\section{Reducing to the case of hyperbolic $\Q$H$S^3$s}\label{3mfldsprel}
 
Among compact, irreducible, $\partial$-irreducible 3-manifolds that are not hyperbolic $\Q$H$S^3$s, the fact that being Haken is a profinite property follows from prior results in the literature. As such, the general question of whether the Haken property is a profinite invariant reduces to the case of hyperbolic $\Q$H$S^3$s, as in the setting of \cref{thm:mainhakentheorem}. First, we consider the case where $M$ has positive $b_1$.
\begin{lemma}
    Let $M$ be a compact, irreducible 3-manifold with $b_1(M)>0$, if $N$ is a compact 3-manifold with $\widehat{\pi_1(M)}\cong\widehat{\pi_1(N)}$, then $N$ is Haken.
    \begin{proof}
        From $\widehat{\pi_1(M)}\cong\widehat{\pi_1(N)}$, it follows that $H_1(M,\Z)\cong H_1(N,\Z)$ by \cref{cor:profinitecompletiondetectsH1}, and therefore $b_1(N)>0$. The 3-manifold $N$ is irreducible by Theorem A \cite{WZ2}, and $N$ is Haken ( \cite[Lemma 6.6]{Hempel3mfds}).
    \end{proof}
\end{lemma}

\noindent The following theorem handles the case where the 3-manifold $M$ is a $\mathbb{Q}HS^3$ and is not hyperbolic. 

\begin{theorem}\label{structuralreduction}
    Let $M$ be a compact, irreducible, $\partial-$irreducible $\Q $H$S^3$ that is not hyperbolic. For $N$ a 3-manifold with $\widehat{\pi_1(M)}\cong\widehat{\pi_1(N)}$, $M$ is Haken iff $N$ is Haken.

\begin{proof}
Since $M$ is non-hyperbolic, $M$ is either geometric in the sense of Thurston or $M$ admits a non-trivial JSJ decomposition along essential tori. If $M$ admits a non-trivial JSJ decomposition along essential tori, by Theorem B \cite{WZ2} $N$ also admits a non-trivial JSJ decomposition. In particular, $N$ contains an essential torus and is Haken.

\medbreak\noindent We can therefore assume that $M$ is geometric and non-hyperbolic. Thus, the manifold $M$ is either Seifert-fibered or has Sol geometry. Since $M$ is a $\Q$H$S^3$, $M$ must be a closed 3-manifold. Otherwise by Poincare duality, we have $b_1(M) = \frac{1}{2}b_1(\partial M) > 0$. When $M$ is a closed Seifert-fibered 3-manifold, by Theorem 1.2 \cite{wilkesSFS} $M$ is either completely distinguished by the profinite completion of $\pi_1(M)$ or $M$ is one of the examples of Hempel \cite{Hempel}. In the first case, $N$ is homeomorphic to $M$, and in the second case $N$ is the mapping torus of a periodic surface homeomorphism; in either case, $N$ is Haken. 

 \medbreak\noindent When $M$ has Sol geometry, $N$ has Sol geometry as well, and $N$ is a Haken manifold. This is because $N$ has a finite sheeted regular cover $N'$ that is a torus bundle over $S^1$ with Anosov monodromy whose linear representative is a matrix $A$ in $\SL_2(\Z)$ with $|tr(A)|>2$, see \cite[Proposition 12.7.6]{Martelli} for example. By a straightforward computation, we see that $b_1(N') = 1$, see \cite[Lemma 3.5]{BR} for example. By Poincaré duality, we get $H_2(N';\Z) \cong \Z$. A generator for $H_2(N',\Z)$ is fixed up-to-sign by the finite deck group action and therefore we can apply Theorem 3.1 \cite{HassHomology} to conclude that $N$ is Haken.


\end{proof}
\end{theorem}

\section{Proof of the main theorem}\label{mainproof}
For the rest of this section let $M$ be a closed Haken, hyperbolic $\Q$H$S^3$ and $N$ be a 3-manifold such that $\widehat{\pi_1(N)}\cong\widehat{\pi_1(M)}$. We fix an isomorphism $\Phi:\widehat{\pi_1(N)}\to\widehat{\pi_1(M)}$. Following Liu, we say that $(M,N;\Phi)$ is a \emph{profinite pair in the Haken setting}.  

\subsection{The relationship between conditions in \cref{thm:mainhakentheorem} and Haken}

Before embarking on the proof of \cref{thm:mainhakentheorem}, we discuss the relationships among the conditions in \cref{thm:mainhakentheorem} as well as the Haken property of 3-manifolds. Most implications are well-known results from 3-manifold topology with the exception of the equivalence between (2) and (3) whose proof will be given in this section.    

If $M$ contains an embedded virtual fiber, then $M$ is a union of two twisted $I$-bundles over a non-orientable surface \cite{Purcell}. It follows from this decomposition of $M$ that $\pi_1(M)$ maps onto the infinite dihedral group $D_\infty$. Hempel and Jaco proved a partial converse of the previous implication assuming that the kernel of the surjective homomorphism onto $D_\infty$ has finitely generated kernel, see \cite{HempelJacoExtensions}, also \cite[Theorem 11.1]{Hempel3mfds}. 

Condition (2) in \cref{thm:mainhakentheorem} is taken from the work \cite{HassHomology} of Hass who showed that if a 3-manifold $M$ satisfies (2) then it is Haken. For convenience, we will use the following  terminology. 

\begin{defn}\label{def:PropertyH}
A closed 3-manifold $M$ has Property $\mathcal{H}$ if it has a regular, finite-sheeted cover $M'$ with a non-trivial fixed class (up to sign) in $H_2(M',\Z)$ under the induced action of the deck group $\pi_1(M)/\pi_1(M')$.
\end{defn}

\noindent We make the following observations about Property $\mathcal{H}$ in the case of $\mathbb{Q}HS^3$ which will be used in the subsequent discussions. The remark below was communicated to us by Danny Ruberman. 

\begin{remark}
    \label{rem:Ruberman}
    Suppose that for some $\beta \in H_2(M';\mathbb{Z})\setminus \{0\}$ we have $g_*\beta=\beta$ for all $g\in G$ in the setup of \cref{def:PropertyH}. It follows that $\beta$ is a non-trivial homology class in the image of the transfer homomorphism $H_2(M;\Z)\to H_2(M';\Z)$. In particular, $M$ would not be a $\Q HS^3$ and would be Haken.  
\end{remark}

\begin{prop}
\label{prop:EffectivePropertyH}
    Suppose that $M$ is a $\mathbb{Q}HS^3$. Then $M$ has Property $\mathcal{H}$ if and only if  $\pi_1(M)$ admits the infinite dihedral group $D_\infty$ as a quotient. 
\end{prop}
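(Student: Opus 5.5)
The plan is to prove both directions by translating Property $\mathcal{H}$ into a statement about $H^1(M';\Z)$ with its deck group action, using Poincaré duality $H_2(M';\Z)\cong H^1(M';\Z)\cong \mathrm{Hom}(\pi_1(M'),\Z)$. Throughout, $M$ is closed and orientable. We then need the orientation-preserving deck transformations to act compatibly with this duality.

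\medbreak\noindent\textbf{($\Leftarrow$).} Suppose $\rho:\pi_1(M)\twoheadrightarrow D_\infty=\Z\rtimes\Z/2$, and let $t:\Z\to\Z$ denote the translation subgroup. Set $\pi_1(M')=\rho^{-1}(\Z)$. This is an index two normal subgroup, so $M'\to M$ is a regular double cover with deck group $\Z/2$. The restriction $\phi=\rho|_{\pi_1(M')}:\pi_1(M')\to\Z$ is surjective, hence a nonzero class in $H^1(M';\Z)$. The nontrivial deck element acts on $\pi_1(M')$ by conjugation by some $g\notin\pi_1(M')$. Since $\rho(g)$ is a reflection, it inverts the translation subgroup, so $g^*\phi=-\phi$. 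Let $\alpha\in H_2(M';\Z)$ be the Poincaré dual of $\phi$. Deck transformations preserve orientation because $M'\to M$ is a cover of an oriented manifold. Hence $g_*$ commutes with Poincaré duality up to the relevant identification, and we get $g_*\alpha=\pm\alpha$. So $M$ has Property $\mathcal{H}$.

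\medbreak\noindent\textbf{($\Rightarrow$).} Let $M'\to M$ be regular with deck group $G=\pi_1(M)/\pi_1(M')$, and let $0\neq\alpha\in H_2(M';\Z)$ satisfy $g_*\alpha=\epsilon(g)\alpha$ with $\epsilon(g)\in\{\pm1\}$.
\begin{enumerate}
    \item The map $\epsilon:G\to\{\pm1\}$ is a homomorphism. It is nontrivial: otherwise $\alpha$ is $G$-fixed, and \cref{rem:Ruberman} forces $b_1(M)>0$, a contradiction.
    \item Let $\phi\in\mathrm{Hom}(\pi_1(M'),\Z)$ be the Poincaré dual of $\alpha$. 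Replace $\phi$ by its primitive multiple, so that $\phi$ is surjective. Then $\phi(g h g^{-1})=\epsilon(g)\phi(h)$ for $g\in\pi_1(M)$ and $h\in\pi_1(M')$, where $\epsilon$ is pulled back to $\pi_1(M)$.
    \item Let $K=\ker\phi$. This is normal in $\pi_1(M)$ by the twisted equivariance in step (2). The group $Q=\pi_1(M)/K$ contains $\pi_1(M')/K\cong\Z$ as a finite-index normal subgroup, and $Q$ acts on it through $\epsilon$.
\end{enumerate}

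\medbreak\noindent The main obstacle is extracting an honest $D_\infty$ quotient from $Q$. We know $Q$ is a virtually-$\Z$ group whose action on its infinite cyclic subgroup is nontrivial, via $\epsilon$. By the standard structure of virtually cyclic groups, $Q$ has a maximal finite normal subgroup $F$ with $Q/F\cong\Z$ or $D_\infty$. Here $F$ is the torsion of the centralizer part, or equivalently the kernel of the action on the ends. The case $Q/F\cong\Z$ would mean every element acts trivially on the two ends. But an element $g$ with $\epsilon(g)=-1$ acts on $\pi_1(M')/K\cong\Z$ by inversion, so it swaps the ends of $Q$. Therefore $Q/F\cong D_\infty$, and composing $\pi_1(M)\to Q\to Q/F$ gives the desired surjection. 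I would check carefully that the end-swapping is well defined. This holds because the ends of $Q$ are the ends of the finite-index subgroup $\Z$, and conjugation by $g$ inverts that subgroup.
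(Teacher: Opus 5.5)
Your proof is correct. The ($\Leftarrow$) direction is the same as the paper's, but for ($\Rightarrow$) you take a genuinely different route. The paper descends to the intermediate double cover $M_H$ with $H=\ker\epsilon$. It pushes $\alpha$ forward to $\beta\in H_2(M_H;\Z)$ and uses the transfer to see $\beta\neq 0$. It then notes that the deck involution $t$ negates the primitive dual $\beta^*$, so $\beta^*(t^2)=0$. Finally it writes down $t^\varepsilon\gamma\mapsto a^\varepsilon b^{\beta^*(\gamma)}$ and checks the relations by hand. You instead stay in $M'$, observe that $K=\ker\phi$ is normal in all of $\pi_1(M)$, and extract $D_\infty$ from the two-ended group $Q=\pi_1(M)/K$ via the structure theorem for virtually infinite cyclic groups.

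The paper's argument is elementary and self-contained; it is essentially that structure theorem carried out by hand in this special case. It also exhibits the $D_\infty$-map explicitly as the extension of a cohomology class on a double cover, mirroring the ($\Leftarrow$) construction. Yours is shorter modulo a standard black box. It also makes the kernel of the quotient visible as a finite extension of $\ker\phi$, so the paper's closing remark (a fiber class gives a finitely generated kernel) becomes immediate.

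Two small corrections. First, your parenthetical description of $F$ is off: the kernel of the action on the ends is the index-$\le 2$ subgroup $Q_0$ with $Q_0/F\cong\Z$, not $F$ itself. Second, the ends discussion can be replaced by a one-line algebraic argument. Let $z$ generate $\pi_1(M')/K$. Since $F$ is finite and $\langle z\rangle$ is torsion-free, $F\cap\langle z\rangle=1$, so $\langle z\rangle$ embeds in $Q/F$. If $Q/F$ were abelian, the relation $gzg^{-1}=z^{-1}$ for $\epsilon(g)=-1$ would force $\bar z^{2}=1$ with $\bar z\neq 1$, which is impossible in $\Z$.
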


\begin{proof}
    Suppose that there exists a group epimorphism $\rho: \pi_1(M) \to D_\infty$ where
    \begin{equation}
    \label{eq:InfiniteDihedralPresentation}
    D_\infty  = \langle a,b \mid a^2, aba^{-1} = b^{-1}\rangle.
    \end{equation}
    Let $M'\to M$ be the degree-two cover such that $\pi_1(M') = \rho^{-1}(\langle b \rangle)$. The restriction of $\rho$ to $\pi_1(M')$ gives a cohomology class $\phi:\pi_1(M')\to\Z$ in $H^1(M';\Z)$ such that $\rho(\gamma) = b^{\phi(\gamma)}$ for all $\gamma \in \pi_1(M')$. Let $t \in \pi_1(M)$ such that $\rho(t) = a$. Since $\rho(t\gamma t^{-1}) = a\rho(\gamma) a^{-1} = \rho(\gamma)^{-1}$ for all $\gamma \in \pi_1(M')$, we have 
    \[
    \phi(t\gamma t^{-1}) = -\phi(\gamma). 
    \]
    In particular, the action of the deck group $\pi_1(M)/\pi_1(M') \cong \Z/2\Z$ negates $\phi$. The Poincare dual $\alpha \in H_2(M';\mathbb{Z})$ of $\phi \in H^1(M';\Z)$ gives an element that is fixed up to sign by $\pi_1(M)/\pi_1(M')$. Therefore, $M$ has Property $\mathcal{H}$.    
    
    Suppose that $M$ has Property $\mathcal{H}$. Then there exists a finite regular cover $M_G \to M$ with deck group $G = \pi_1(M)/\pi_1(M_G)$, and a non-trivial class $\alpha \in H_2(M;\Z)$ that is fixed by $G$ up to sign. In particular, $G$ acts on $\{\pm \alpha\}$ nontrivial. Otherwise, the triviality of this action would contradict the fact that $M$ is a $\mathbb{Q}HS^3$, see \cref{rem:Ruberman}. 
    
    Let $H$ be the index-two subgroup of $G$ that fixes $\alpha$ and $M_H$ be a degree-two cover of $M$ that corresponds to $H$. The cover $M_G \to M_H$ is regular, of finite index and has deck group $H = \pi_1(M_H)/\pi_1(M_G)$. Let $\beta \in H_2(M_H;\Z)$ be the image of $\alpha \in H_2(M_G;\Z)$ under the induced homomorphism of the covering map $M_G \to M_H$.
    Since the deck group $H$ fixes $\alpha$, the image of $\beta \in H_2(M_H;\Z)$ in $H_2(M_G;\Z)$ under the transfer homomorphism is nontrivial, see \cref{rem:Ruberman}. In particular, $\beta$ is a non-trivial in $H_2(M_H;\Z)$. Let $\beta^*:\pi_1(M_H) \to \Z$ be the Poincare dual to $\beta$ that is also primitive.

    We claim that there exists an epimorphism from $\pi_1(M)$ to the infinite dihedral group. We present $\pi_1(M) = \langle t, \pi_1(M_H) \rangle$ for some fixed $t \not\in\pi_1(M_H)$. The action of $\pi_1(M)/\pi_1(M_H)$ on $\beta$ is multiplication by $-1$ since it is also multiplication by $-1$ on $\alpha$. In particular, we have
    \[
    \beta^*(t\gamma t^{-1}) = -\beta^*(\gamma)
    \]
    for all $\gamma \in \pi_1(M_H)$. When $\gamma = t^2$, we get $\beta^*(t^2) = -\beta^*(t^2)$ which implies that $\beta^*(t^2) = 0$ since $\Z$ has no 2-torsion.
    
    Since $\pi_1(M_H)$ has index two and $t \not \in \pi_1(M_H)$, we can write elements of $\pi_1(M)$ uniquely as $t^{\varepsilon} \gamma$ for $\gamma \in \pi_1(M_H)$ and $\varepsilon \in \{0,1\}$. Using the presentation in \cref{eq:InfiniteDihedralPresentation}, we define an assignment
    \[
    t^\varepsilon \gamma\mapsto \rho(t^\varepsilon \gamma) = a^\varepsilon  b^{\beta^*(\gamma)} 
    \]
    for all $\gamma \in \pi_1(M_H)$. We check that this assignment defines a group homomorphism. Since the restriction of $\rho$ to $\pi_1(M_H)$ is just the group homomorphism $\beta^*$ written in multiplicative notation, it suffices to check that
    \[
    \rho(t\gamma t^{-1}) = b^{\beta^*(t\gamma t^{-1})} =  b^{-\beta^*(\gamma)} = ab^{\beta^*(\gamma)}a^{-1} = \rho(t)\rho(\gamma)\rho(t^{-1})
    \]
    for every $\gamma \in \pi_1(M_H)$ and that
    \[
    \rho(t^2) = b^{\beta^*(t^2)} = b^0 = 1.
    \]
    We get a group epimorphism $\rho:\pi_1(M) \to D_\infty$ since $\beta^*$ is chosen to be primitive. 
    If we further assume that $\alpha$ can be represented by a fiber class, then $\beta$ is a homomorphism with finitely generated kernel. 
\end{proof}

Now if $\pi_1(M)$ has an infinite dihedral quotient, then $\pi_1(M)$ has dihedral quotients $D_{2k}$ of any order $|D_{2k}| = 2k$ for $k \geq 1$. Since $D_{2k}$ can be realized as a subgroup of $\PSL_2(\mathbb C)$, it follows that the $\PSL_2(\mathbb{C})$-character variety of $\pi_1(M)$ contains a curve, see also the first proof of \cref{lem:EVF_Detection}. We note that having an infinite dihedral quotient does not imply that the $\SL_2(\mathbb{C})$-character varierty of $\pi_1(M)$ contains a curve since the finite dihedral representations need not lift to $\SL_2(\mathbb{C})$. 

Conditions (4) and (5) in \cref{thm:mainhakentheorem} are motivated by Culler--Shalen theory \cite{CullerShalen}. One of the main results of Culler--Shalen theory shows that if $\pi_1(M)$ has a curve in its $\SL_2(\mathbb{C})$-character variety then $M$ is Haken. Extending Culler--Shalen theory, Boyer and Zhang showed that similar condition for $\PSL_2(\mathbb{C})$-character variety also implies that $M$ is Haken \cite[Theorem 4.3]{BoyerZhang}. The fact that algebraic non-integral point on the character variety of $\pi_1(M)$ also corresponds to splittings of $\pi_1(M)$ can be found in \cite[Theorem 5.2.7]{MaclachlanReidBook} as a direct consequence of \cite{SerreTrees}. 

Finally, we remark that by \cref{prop:EffectivePropertyH}, Property $\mathcal{H}$ implies that there is a curve in the $\PSL_2(\mathbb{C})$-character variety of $\pi_1(M)$. Boyer--Zhang result then tells us that $M$ is Haken \cite[Theorem 4.3]{BoyerZhang}. In other words, this gives an alternative proof of \cite[Theorem 3.1]{HassHomology}.

We summarize the discussion above in the following diagram:

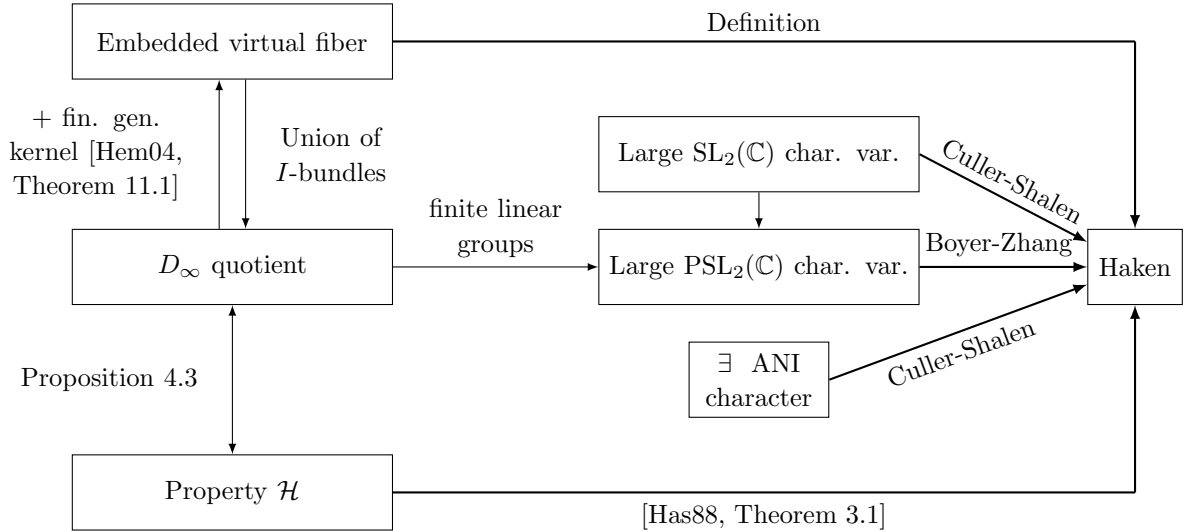
\begin{figure}[h!]
\begin{tikzpicture}[propnode/.style={draw, rectangle, minimum size=1cm}, node distance=5cm, align=center]

\node[propnode] (EVF) at (-6,2) [text width=4cm]  {\small Embedded virtual fiber};
\node[propnode] (Hass) at (-6,-4) [text width=4cm] {\small Property $\mathcal{H}$ };
\node[propnode] (Dihedral) at (-6,-1) [text width=4cm]{\small $D_\infty$ quotient};

\node[propnode] (SLCurve) at (1,0.5) [text width=4cm]{\small Large $\SL_2(\mathbb{C})$ char. var.};

\node[propnode] (PSLCurve) at (1,-1) [text width=4cm]{\small Large $\PSL_2(\mathbb{C})$ char. var.};

\node[propnode] (ANI) at (1,-2.5) [text width=1.6cm]{\small $\exists\ $ ANI \\ character};

\draw[-latex] ([xshift = 5]EVF.south) -- ([xshift = 5]Dihedral.north) node[midway, right, text width=2cm, align=center ] {\small Union of $I$-bundles}; 

\draw[-latex] ([xshift = -5]Dihedral.north) -- ([xshift = -5]EVF.south) node[midway, left, text width=3cm, align=center ] {\small + fin. gen. kernel \cite[Theorem 11.1]{Hempel3mfds}}; 

\draw[latex-latex] (Hass) -- (Dihedral) node[midway, left, text width=3cm, align = center] {\small \cref{prop:EffectivePropertyH}}; 

\draw[-latex] (Dihedral) -- (PSLCurve) node[sloped, midway, above, text width=2cm, align=center] {\small finite linear groups};
\draw[-latex] (SLCurve) -- (PSLCurve);
\node[propnode] (Haken) at (6,-1) [text width=1cm]{\small Haken};

\draw[-latex,thick] (SLCurve.east) -- (Haken) node[sloped, midway, above, align=center] {\small Culler-Shalen}; 
\draw[-latex,thick] (PSLCurve) -- (Haken) node[sloped, midway, above, align=center] {\small Boyer-Zhang }; 
\draw[-latex,thick] (ANI.east) -- (Haken) node[sloped, midway, below, align=center] {\small Culler-Shalen }; 
\draw[-,thick] (EVF) -- (6,2) node[midway, above]{\small Definition};
\draw[-latex,thick] (6,2) -- (Haken); 
\draw[-,thick] (Hass) -- (6,-4)  node[midway, below]{\small \cite[Theorem 3.1]{HassHomology}};
\draw[-latex,thick] (6,-4) -- (Haken); 
\end{tikzpicture}
\caption{The diagram summarizes the implication among the conditions in \cref{thm:mainhakentheorem} as well as the property of being Haken of a 3-manifold. Here, the (4) condition is referred to as having a large character variety. This terminology comes from the fact that a knot exterior is large if it contains a closed embedded essential surface. Finally, ``$\exists$ ANI character" denotes the last condition of \cref{thm:mainhakentheorem} in which ``ANI" strands for algebraic non-integral.} 
\end{figure}

\subsection{When $M$ has an embedded virtual fiber or $\pi_1(M)$ has an infinite dihedral quotient}
\begin{defn}
    An essential surface $\Sigma\subset M$ is a virtual fiber if there is a finite sheeted cover $p:M'\to M$ such that a component of $p^{-1}(\Sigma)$ is a fiber of a fibration of $M'$ over $S^1$.
    \end{defn}

\begin{lemma}\label{lem:EVF_Detection}
Let $M$ be a hyperbolic $\mathbb{Q}HS^3$ containing an embedded, essential virtual fiber $\Sigma$, and let $N$ be as above. Then $N$ is Haken and contains an embedded, essential virtual fiber.
\end{lemma}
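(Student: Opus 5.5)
The plan is to convert the embedded virtual fiber into algebraic data that finite quotients can see, transport that data to $N$, and then convert it back into an embedded surface in $N$. By the discussion above, since $M$ is a $\Q$H$S^3$ containing an embedded essential virtual fiber $\Sigma$, $M$ is a union of two twisted $I$-bundles over a non-orientable surface. This gives an epimorphism $\rho:\pi_1(M)\to D_\infty$ whose kernel $K$ is finitely generated; indeed $K$ is the fundamental group of the orientable double of $\Sigma$. Let $M'\to M$ be the degree-two cover with $\pi_1(M')=\rho^{-1}(\langle b\rangle)$. As in the proof of \cref{prop:EffectivePropertyH}, $M'$ carries a class $\phi\in H^1(M';\Z)$ with the following properties:
\begin{enumerate}
\item $\phi$ is primitive and fibered, with fiber the lift of $\Sigma$;
\item the deck involution $\tau$, induced by conjugation by $t$, satisfies $\tau^*\phi=-\phi$.
\end{enumerate}

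Next I transport this to $N$. First, by Wilton--Zalesskii \cite{WZ1} together with Theorem A of \cite{WZ2}, $N$ is a closed irreducible hyperbolic manifold. Using $\Phi$ and the correspondence between finite-index subgroups and open subgroups, the open index-two subgroup $\overline{\pi_1(M')}$ corresponds to an index-two subgroup $\pi_1(N')\le\pi_1(N)$. Then $\Phi$ restricts to an isomorphism $\widehat{\pi_1(N')}\cong\widehat{\pi_1(M')}$. Choose $t'\in\pi_1(N)\setminus\pi_1(N')$ with deck involution $\tau'$. Since $\Phi$ intertwines conjugation by $t'$ with conjugation by an element of $\overline{\pi_1(M)}\setminus\overline{\pi_1(M')}$, it satisfies $\Phi_*\circ\tau'_*=\tau_*\circ\Phi_*$ on completed free first homology. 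Here $\tau_*$ is well defined because inner automorphisms act trivially on homology.

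Now apply \cref{thm:zhatregularity} to $(N',M')$ to write $\Phi_*=m_\mu\circ\widehat{s}$ for an integral isomorphism $s$. Since $m_\mu$ is scalar, it commutes with everything, so $\widehat{s}$ intertwines $\tau'_*$ and $\tau_*$. Because $H_1(\cdot)_{free}\to\widehat{H}_1(\cdot)_{free}$ is injective, $s$ itself intertwines them. Dualizing, $\psi:=s^*\phi\in H^1(N';\Z)$ is primitive and satisfies $\tau'^*\psi=-\psi$.

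The main obstacle is showing that $\psi$ is fibered. For this I would invoke the profinite detection of fibered classes due to Jaikin-Zapirain \cite{JZ} and Liu: under a profinite isomorphism of hyperbolic 3-manifold groups, a primitive class $\phi$ is fibered if and only if the class corresponding to it via the $\widehat{\Z}^\times$-regular identification is fibered. The scalar $\mu$ is harmless because fiberedness is invariant under sign, and more generally the relevant criterion, via Novikov homology or Thurston norm data, is insensitive to units. I would check that the cited statement is phrased compatibly with $s$. Failing that, I would use the profinite detection of $\ker\phi$ being finitely generated, via $\widehat{\ker}$ being the closure and Jaikin-Zapirain's criterion.

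Granting fiberedness, the construction in the proof of \cref{prop:EffectivePropertyH} with $\beta^*=\psi$ and $t=t'$ produces an epimorphism $\pi_1(N)\to D_\infty$ whose kernel $\ker\psi$ is finitely generated. By Hempel--Jaco \cite[Theorem 11.1]{Hempel3mfds}, $N$ is then a union of two twisted $I$-bundles over a non-orientable surface. The core surface of this decomposition is an embedded essential surface, and it lifts to the fiber of $\psi$ in $N'$. Hence it is an embedded virtual fiber, and in particular $N$ is Haken.
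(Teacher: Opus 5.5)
Your proposal is correct and is essentially the paper's second proof of \cref{lem:EVF_Detection}: pass to the index-two cover $M'$, transport the fibered class to $N'$ through Liu's $\widehat{\Z}^\times$-regularity (\cref{thm:zhatregularity}), obtain fiberedness from Liu's Corollary 6.2 (the detection result you invoke), and finish with Hempel's theorem. The only difference is cosmetic: you rebuild the epimorphism $\pi_1(N)\to D_\infty$ from the anti-invariance $(\tau')^{*}\psi=-\psi$ after cancelling the scalar $\mu$, whereas the paper reads it off directly as the image $\widehat{f}\circ\Phi(\pi_1(N))\le\widehat{D}_\infty$, which contains $\mu\Z$ with index two and is not $\Z$ because $N$ is a $\Q$H$S^3$.
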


We provide the first proof of \cref{lem:EVF_Detection} communicated to us by Alan Reid. 

\begin{proof}[The first proof of \cref{lem:EVF_Detection}]
The 3-manifold $M$ is a union of two twisted $I-$bundles (over a non-orientable surface) $A$ and $B$ along their boundary $\Sigma$ (\cite[Lemma 12.27]{Purcell}). The manifolds $A$ and $B$ are double covered by $\Sigma\times I$. Thus, there is an epimorphism $\pi_1(M)\twoheadrightarrow \Z/2\Z\star\Z/2\Z \cong D_\infty$. Since $D_\infty$ surjects infinitely many finite dihedral groups, $\pi_1(M)$ surjects infinitely many finite dihedral groups. Finite dihedral groups are subgroups of PSL$(2,\C)$ and so the group $\pi_1(M)$ has infinitely many non-conjugate $\PSL(2,\C)$ representations. As $\pi_1(N)$ has the same finite quotients as $\pi_1(M)$, $\pi_1(N)$ also has infinitely many distinct finite dihedral $\PSL(2,\C)$ representations. Thus, the $\PSL(2,\C)$ character variety of $N$ has a curve, and by \cite[Theorem 4.3]{BoyerZhang}, $N$ is Haken. 
\end{proof}

Now we present a second proof of \cref{lem:EVF_Detection} using the work of Liu \cite{LiuAlmostProfiniteRigidity} to obtain a stronger conclusion about the essential surface in $N$. 

\begin{proof}[The second proof of \cref{lem:EVF_Detection}]
     Similar to the previous proof, we start with an epimorphism $f:\pi_1(M)\twoheadrightarrow D_\infty$ to the infinite dihedral group with completion $\widehat{f}:\widehat{\pi_1(M)}\twoheadrightarrow\widehat{D}_\infty$. Recall that we have an isomorphism $\Phi:\widehat{\pi_1(N)}\to\widehat{\pi_1(M)}$. Let $\pi_1(M')$ be the index-two subgroup of $\pi_1(M)$ that surjects the index-two copy of $\Z$ in $D_\infty$. The restriction $f|_{\pi_1(M')}$ has finitely generated kernel since it is the fundamental group of the fiber surface of $M'$ over $S^1$ where the fiber is a lift of $\Sigma$. The intersection of $\pi_1(N)$ with $\Phi^{-1}(\widehat{\pi_1(M')})$ is an index-two subgroup $\pi(N')$ of $\pi_1(N)$. The restriction of $\Phi$ to $\widehat{\pi_1(N')}$ is an isomorphism $\Phi:\widehat{\pi_1(N')} \to \widehat{\pi_1(M')}$. Let $\Phi_*:\widehat{H}_1(N',\Z)_{\free} \to :\widehat{H}_1(M',\Z)_{\free}$ be the induced homomorphism. By \cref{thm:zhatregularity}, there exists a unit $\mu \in \widehat{\mathbb{Z}}^\times$ and an isomorphism 
     \[
     s: H_1(N',\Z)_{\free} \to H_1(M';\Z)_{\free}
     \]
     such that $\Phi_*= m_\mu \circ \widehat{s}$ where $m_\mu$ is left multiplication by $\mu$ in $\widehat{H}_1(M';\Z)_{\free}$. In particular, we can factor $\Phi_*$ as 
     \begin{equation}
         \begin{aligned}
     \label{eq:FactorPhiStar}
          \widehat{H}_1(N';Z)_{\free} = H_1(N';Z)_{\free} \otimes_{\Z} \widehat{\Z} \xrightarrow{s \otimes 1} 
          &H_1(M';Z)_{\free} \otimes_{\Z} \widehat{\Z} \\  
          \xrightarrow{1 \otimes \mu} & H_1(M';Z)_{\free} \otimes_{\Z} \widehat{\Z}=\widehat{H}_1(N';Z)_{\free} 
     \end{aligned}
     \end{equation}
     
     The map $\widehat{f} \circ \Phi: \pi_1(N') \to \widehat{\Z/2\Z*\Z/2\Z}$ factors as
     \[
     \pi_1(N') \hookrightarrow \widehat{\pi_1(N')} \xrightarrow{p}  \widehat{H}_1(N';Z)_{\free} \xrightarrow{\Phi_* =  m_\mu \circ \widehat{s}} \widehat{H}_1(M';Z)_{\free} \xrightarrow{\widehat{f}_*} \widehat{\Z}.
     \]
     Using \cref{eq:FactorPhiStar}, we can compute the image of $\pi_1(N')$ under $\widehat{f} \circ \Phi$ which is infinite cyclic
     \begin{equation}
     \begin{aligned}
     \widehat{f}(\Phi(\pi_1(N'))) 
     = &\widehat{f}_*(m_\mu(s(H_1(N';\Z)))) \\ 
     = &\widehat{f}_*(\mu H_1(M';\Z)) = \mu f(\pi_1(M')) \cong \mu \Z 
     \end{aligned}
     \end{equation}
     
     \noindent
     Thus, the image $\widehat{f}\circ \Phi(\pi_1(N))$ in $\widehat{\Z/2\Z\star\Z/2\Z}$ has an infinite cyclic subgroup of index 2. 
     
     Since $M'$ fibers with a fiber surface dual to $f|_{\pi_1(M')}$, the kernel of $f|_{\pi_1(M')}$ is finitely generated. By \cite[Corollary 6.2]{LiuAlmostProfiniteRigidity}, the kernel of $\widehat{f}\circ\Phi|_{\pi_1(N')}$ is also finitely generated. Since $N$ is a $\mathbb{Q}HS^3$, $(f\circ \Phi)(\pi_1(N))\not\cong\Z$, and so $(f\circ \Phi)(\pi_1(N))$ is infinite dihedral and $f\circ\Phi|_{\pi_1(N)}$ has finitely generated kernel. It follows that $\pi_1(N)$ acts non-trivially on the line, and is therefore Haken. By \cite[Theorem 11.8]{Hempel3mfds}, $N$ contains an embedded virtual fiber.  
\end{proof}
The method of the second proof of \cref{lem:EVF_Detection} yields a stronger result. That is, having an infinite dihedral quotient is detectable by finite quotients. Furthermore, containing an embedded virtual fiber is also detectable by finite quotient. Since Property $\mathcal{H}$ is equivalent to having an infinite dihedral quotient \cref{prop:EffectivePropertyH}, we have the following corollary of \cref{prop:EffectivePropertyH} and \cref{lem:EVF_Detection}.

\begin{cor}\label{cor:detecting_EVF_Dihedral_H}
    Let $(M,N;\Phi)$ be a profinite pair in the Haken setting. If $M$ contains an embedded virtual fiber, then $N$ also contains an embedded virtual fiber. If $\pi_1(M)$ has an infinite dihedral quotient, or equivalently Property $\mathcal{H}$, then $\pi_1(N)$ has an infinite dihedral quotient, or equivalently Property $\mathcal{H}$. In all cases, $N$ is also Haken.  
\end{cor}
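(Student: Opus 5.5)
The plan is to assemble \cref{cor:detecting_EVF_Dihedral_H} from \cref{lem:EVF_Detection}, its second proof, and \cref{prop:EffectivePropertyH}.

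\textbf{Embedded virtual fibers.} This part is exactly the conclusion of \cref{lem:EVF_Detection}, applied to the profinite pair $(M,N;\Phi)$. Its second proof already shows that $N$ contains an embedded virtual fiber, via \cite[Theorem 11.8]{Hempel3mfds}.

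\textbf{Infinite dihedral quotients.} Start from an epimorphism $f:\pi_1(M)\twoheadrightarrow D_\infty$ and repeat the second proof, dropping the finitely generated kernel input.
\begin{enumerate}
\item Let $\pi_1(M')=f^{-1}(\langle b\rangle)$, an index-two subgroup.
\item Let $\pi_1(N')=\pi_1(N)\cap\Phi^{-1}(\widehat{\pi_1(M')})$. This has index two, and $\Phi$ restricts to an isomorphism $\widehat{\pi_1(N')}\to\widehat{\pi_1(M')}$.
\item Note that $f|_{\pi_1(M')}$ is a nonzero class in $H^1(M';\Z)$, so $b_1(M')>0$.
\item By \cref{thm:zhatregularity}, the map induced on free parts of homology factors as $m_\mu\circ\widehat{s}$. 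Hence $\widehat{f}\circ\Phi(\pi_1(N'))=\mu\cdot f(\pi_1(M'))\cong\Z$, an infinite cyclic subgroup of $\widehat{D}_\infty$.
\item Since this subgroup has index at most two in $G:=\widehat{f}\circ\Phi(\pi_1(N))$, the group $G$ is virtually $\Z$ and finitely generated.
\item $G$ is not $\Z$: otherwise $b_1(N)>0$, contradicting $H_1(N)\cong H_1(M)$ from \cref{cor:profinitecompletiondetectsH1}.
\item It remains to rule out $G\cong\Z\times\Z/2$ and similar virtually cyclic groups. For this, take an element $t\in\pi_1(N)\setminus\pi_1(N')$. Its image acts on the $\Z$-image by inversion, because $\Phi(t)$ maps into the non-identity coset of $\widehat{\Z}$ in $\widehat{D}_\infty$, where conjugation acts by $-1$.
\item Then $G/\text{torsion}$, or directly the construction from the proof of \cref{prop:EffectivePropertyH}, using the primitive class $\mu^{-1}\widehat{f}\circ\Phi|_{\pi_1(N')}$ rescaled to $\Z$, gives an epimorphism $\pi_1(N)\twoheadrightarrow D_\infty$. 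The rescaling is legitimate because the image is $\mu\Z$.
\end{enumerate}

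\textbf{Property $\mathcal H$.} The equivalence with Property $\mathcal H$ on both sides is \cref{prop:EffectivePropertyH}. It applies because $N$ is also a $\Q$H$S^3$, again since $H_1(N)\cong H_1(M)$.

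\textbf{Haken.} In every case, $N$ is Haken. One route is \cite[Theorem 3.1]{HassHomology} via Property $\mathcal H$. Another is that $D_\infty$ quotients yield a curve in the $\PSL_2(\C)$ character variety, so Boyer--Zhang applies. In the virtual fiber case, Haken also holds by definition.

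\textbf{Main obstacle.} The delicate step is the conjugation sign argument in step 7. One must check that the homomorphism $\phi:\pi_1(N')\to\Z$ obtained from $\widehat{f}\circ\Phi$ satisfies $\phi(t\gamma t^{-1})=-\phi(\gamma)$. This follows because $\widehat{f}\circ\Phi$ is a homomorphism into $\widehat{D}_\infty=\widehat{\Z}\rtimes\Z/2$, and $\Phi(t)$ lies outside $\widehat{\pi_1(M')}$, so it maps to the nontrivial element of $\Z/2$. Once the sign relation is established, the explicit construction in \cref{prop:EffectivePropertyH} finishes the argument.
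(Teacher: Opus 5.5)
Your proposal is correct and follows the paper's route. The virtual-fiber case is \cref{lem:EVF_Detection} via its second proof. The $D_\infty$ case reruns that proof's \cref{thm:zhatregularity} computation without the finite-generation input. \cref{prop:EffectivePropertyH} then supplies the equivalence with Property $\mathcal{H}$ on both sides.

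Your step 7 is a genuine improvement in rigor: the image of $t$ is an involution in $\widehat{\Z}\rtimes\Z/2\Z$ acting by $-1$ on $\mu\Z$. This justifies the step the paper compresses into ``not $\Z$, hence infinite dihedral,'' and shows directly that the image is $D_\infty$. It also makes step 6 and the ill-posed ``$G/\text{torsion}$'' remark unnecessary.
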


\subsection{Property $\mathcal{H}$}\label{section:virtualfixedclasses}

In this section, we provide a direct proof to the fact that the profinite completion of $\pi_1(M)$ detects Property $\mathcal{H}$. \cref{lem:2ZGmodules} could be of independent interest. More precisely, we will prove the following.

\begin{lemma}\label{lem:mainthmpart4restated}
Let $(M,N;\Phi)$ be a profinite pair in the Haken setting. If $M$ satisfies Property $\mathcal{H}$, then $N$ must also satisfy Property $\mathcal{H}$. In particular, $N$ is Haken.
\end{lemma}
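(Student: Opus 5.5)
Our plan is to transport the fixed class directly through the profinite isomorphism, rather than passing through $D_\infty$ as in \cref{cor:detecting_EVF_Dihedral_H}. Suppose $M_G\to M$ is a regular cover with deck group $G=\pi_1(M)/\pi_1(M_G)$ and $0\neq\alpha\in H_2(M_G;\Z)$ with $g_*\alpha=\pm\alpha$ for all $g\in G$. The open normal subgroup $\Phi^{-1}(\overline{\pi_1(M_G)})$ meets $\pi_1(N)$ in a normal subgroup $\pi_1(N_G)$ of the same index. This gives a regular cover $N_G\to N$ with deck group identified with $G$ by $\Phi$. The restriction $\Phi\colon\widehat{\pi_1(N_G)}\to\widehat{\pi_1(M_G)}$ is an isomorphism. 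It is $G$-equivariant, where $G$ acts on both sides by conjugation through lifts. Both $M_G$ and $N_G$ are closed orientable, and $N_G$ is hyperbolic (Wilton--Zalesskii geometric detection \cite{WZ1}). So by Poincaré duality we may replace $H_2$ by $H^1=\mathrm{Hom}(H_1,\Z)$, that is, by the free parts $H_1(-;\Z)_{\free}$. We must check that the duality isomorphism is equivariant up to the orientation character. Since the deck transformations preserve orientation (both covers are of orientable manifolds), it is honestly equivariant.

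By \cref{thm:zhatregularity}, the induced map $\Phi_*\colon\widehat{H}_1(N_G)_{\free}\to\widehat{H}_1(M_G)_{\free}$ equals $m_\mu\circ\widehat{s}$ for a $\Z$-isomorphism $s$ and a unit $\mu\in\widehat{\Z}^\times$. We will show that $s$ is itself $G$-equivariant. For $g\in G$ we have $\Phi_*\circ g_* = g_*\circ\Phi_*$ on the completions, and $m_\mu$ is central. Hence $\widehat{s}\,g_*\,\widehat{s}^{-1}=g_*$ as maps on $\widehat{H}_1(M_G)_{\free}$. Both sides are completions of endomorphisms of the lattice $H_1(M_G)_{\free}$, and $\Z^n\to\widehat{\Z}^n$ is injective, so $s\,g_*\,s^{-1}=g_*$ already holds over $\Z$. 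Dualizing, $s^*\colon H^1(M_G;\Z)\to H^1(N_G;\Z)$ is a $\Z G$-module isomorphism. The class $\phi$ Poincaré dual to $\alpha$ is nonzero and spans a rank-one $\Z G$-submodule on which $G$ acts by a sign character. Its image $s^*\phi$ is then a nonzero class in $H^1(N_G;\Z)$ with the same property, and its Poincaré dual in $H_2(N_G;\Z)$ is fixed up to sign by the deck group. Thus $N$ has Property $\mathcal{H}$, and $N$ is Haken by \cite[Theorem 3.1]{HassHomology}, or alternatively via \cref{prop:EffectivePropertyH} and Boyer--Zhang.

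The main obstacle is the equivariance bookkeeping in the first step. The $G$-action on $H_1(M_G)$ is by conjugation by lifts, which is well defined only modulo inner automorphisms of $\pi_1(M_G)$. These act trivially on homology, so the action is fine. However, we must make sure the actions on $\widehat{H}_1$ of $N_G$ and $M_G$ coincide under $\Phi_*$. This holds because $\Phi$ carries a lift $\tilde g\in\pi_1(N)$ to an element of $\widehat{\pi_1(M)}$ lying in the same coset of $\overline{\pi_1(M_G)}$ as a discrete lift in $\pi_1(M)$. Conjugation by elements of $\overline{\pi_1(M_G)}$ acts trivially on $\widehat{H}_1(M_G)$, by continuity and density. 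If we prefer to avoid Liu's theorem at this point, a fallback is an argument using only $\widehat{\Z}G$-module structure (presumably \cref{lem:2ZGmodules}). That argument would detect a rank-one sign-character summand, since a $\Z G$-lattice with a $\widehat{\Z}G$-isomorphic completion contains such a sublattice iff the original one does. That route is harder, so we will use $\widehat{\Z}^\times$-regularity.
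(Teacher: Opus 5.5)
Your proposal is correct and is essentially the paper's proof. The paper derives this lemma from \cref{lem:2ZGmodules}, which it proves just as you do: it shows that $\Phi_*$ intertwines the deck-group actions on $\widehat{H}_1(-;\Z)_{\free}$, writes $\Phi_*=m_\mu\circ\widehat{s}$ by \cref{thm:zhatregularity}, cancels the central unit $\mu$ so that $s$ becomes a $\Z G$-isomorphism, and then transports the fixed-up-to-sign class and applies Hass; your coset argument for the equivariance is tidier than the paper's limiting argument. As an aside, the fallback you call harder is actually easier: since $\widehat{\Z}$ is flat over $\Z$, the $\chi$-eigenspace of $L\otimes\widehat{\Z}$ equals $L^{\chi}\otimes\widehat{\Z}$ for any $\Z G$-lattice $L$ and sign character $\chi$, so the $G$-equivariance of $\Phi_*$ alone already transfers Property $\mathcal{H}$, without Liu's theorem.
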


\cref{lem:mainthmpart4restated} is a consequence of 
\begin{lemma}\label{lem:2ZGmodules}
Let $M,N$ be profinitely equivalent finite-volume hyperbolic 3-manifolds and $\Phi:\widehat{\pi_1(N)} \to  \widehat{\pi_1(M)} $ be an isomorphism. Let $M'\to M$, $N'\to N$ be a pair of corresponding regular, finite-sheeted covers of $M$ and $N$ respectively with deck group $G$. Then $H_2(M';\Z)\cong H_2(N';\Z)$ as $\Z G$-modules
\end{lemma}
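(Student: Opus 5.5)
The approach is to transport the $G$-action through Liu's $\widehat{\Z}^\times$-regularity (\cref{thm:zhatregularity}) on first homology, and then pass to $H_2$ by Poincaré duality. The key observation is that the scalar $\mu$ commutes with the $G$-action, so it can be stripped off to leave an integral, equivariant isomorphism.

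\emph{Step 1 (setting up the corresponding covers).} I take $\pi_1(N')=\pi_1(N)\cap\Phi^{-1}(\overline{\pi_1(M')})$. By the correspondence between finite-index subgroups and open subgroups (\cite[Proposition 3.2.2]{RibesZalesskiiBook}), this is normal of the same index, and $\Phi$ induces isomorphisms $\widehat{\pi_1(N')}\to\widehat{\pi_1(M')}$ and $G_N=\pi_1(N)/\pi_1(N')\to\pi_1(M)/\pi_1(M')=G$. Use the latter to identify both deck groups with $G$.

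\emph{Step 2 (the profinite map is $G$-equivariant).} The deck action on $H_1(M';\Z)$ is induced by conjugation by lifts $\gamma\in\pi_1(M)$. This action extends continuously to $\widehat H_1(M';\Z)_{\free}=H_1(M';\Z)_{\free}\otimes\widehat\Z$, and there $\gamma$ may be replaced by any element of $\widehat{\pi_1(M)}$ with the same image in $G$. Since $\Phi$ intertwines conjugation, $\Phi_*\colon\widehat H_1(N';\Z)_{\free}\to\widehat H_1(M';\Z)_{\free}$ is $\widehat\Z G$-linear.

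\emph{Step 3 (removing the scalar).} By \cref{thm:zhatregularity}, $\Phi_*=m_\mu\circ\widehat s$ for some $\mu\in\widehat\Z^\times$ and some isomorphism $s\colon H_1(N';\Z)_{\free}\to H_1(M';\Z)_{\free}$. Multiplication by $\mu$ commutes with every $\widehat\Z$-linear map, in particular with the $G$-action, so $\widehat s=m_{\mu^{-1}}\circ\Phi_*$ is $G$-equivariant. For $x\in H_1(N';\Z)_{\free}$ and $g\in G$, the elements $s(gx)$ and $g\,s(x)$ both lie in the lattice $H_1(M';\Z)_{\free}$. They agree in the completion, and the lattice injects into the completion, so they are equal. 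Hence $s$ is a $\Z G$-module isomorphism $H_1(N';\Z)_{\free}\cong H_1(M';\Z)_{\free}$.

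\emph{Step 4 (passing to $H_2$).} In the closed case (the setting of \cref{thm:mainhakentheorem}), $M'$ and $N'$ are orientable and deck transformations preserve orientation, since they cover an orientable manifold. So Poincaré duality $H_2(M';\Z)\cong H^1(M';\Z)\cong\mathrm{Hom}(H_1(M';\Z)_{\free},\Z)$ is $G$-equivariant, with $G$ acting on the Hom by the contragredient action. Dualizing the isomorphism $s$ of Step 3 gives $H_2(N';\Z)\cong H_2(M';\Z)$ as $\Z G$-modules.

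\emph{Main obstacle.} The delicate point is the cusped finite-volume case. There $H_2(M')$ is not $H^1(M')$ but sits in the exact sequence
\[
H_2(\partial M')\to H_2(M')\to H^1(M')\to H^1(\partial M').
\]
To handle it I would first try to use that $\Phi$ respects peripheral subgroups up to conjugacy (Wilton--Zalesskii, or Liu's peripheral regularity), so that $s$ also matches the images of $H_1(\partial)$ equivariantly. Then $\ker\bigl(H^1(M')\to H^1(\partial M')\bigr)$ and the boundary contributions correspond, and a five-lemma style argument would finish, though the extension might not split canonically. For the application in \cref{lem:mainthmpart4restated}, only the closed case is needed, and Steps 1--4 suffice.
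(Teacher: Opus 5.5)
Your proposal is correct for closed $M$, which is the case used in \cref{lem:mainthmpart4restated}. It follows essentially the paper's argument: show $\Phi_*$ on $\widehat H_1(\cdot)_{\free}$ intertwines the deck actions, write $\Phi_*=m_\mu\circ\widehat s$ via \cref{thm:zhatregularity}, divide out $\mu$ because it commutes with the $\widehat\Z$-linear $G$-action, and restrict to the integral lattice. Your Step 2 replaces the paper's approximating sequence $\Phi(\alpha_n)\to g$ with the cleaner observation that the conjugation action factors through $G$, and your Step 4 makes explicit the Poincaré duality step the paper leaves implicit; the paper's proof, like yours, only produces a $\Z G$-isomorphism of $H_1(\cdot)_{\free}$, so the cusped case you flag is not handled there either.
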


\noindent
We now proceed to prove \cref{lem:mainthmpart4restated} assuming \cref{lem:2ZGmodules}.
\begin{proof}[Proof of \cref{lem:mainthmpart4restated}] Let $N'\to N$ be the cover of $N$ corresponding to $M'\to M$. By \cref{lem:2ZGmodules}, $H_2(M';\Z)\cong H_2(N';\Z)$ as $\Z G-$modules. It follows, then, that there is a (non-zero) class $\beta\in H_2(N';\Z)$ such that $g_*\beta=\pm\beta$ for all $g\in G$. By Theorem 3.1 \cite{HassHomology}, $N$ contains an essential embedded surface which is 2-sided. 
\end{proof}

\begin{proof}[Proof of \cref{lem:2ZGmodules}]
The data of the setup is the following: an isomorphism $\Phi:\widehat{\pi_1(N)}\to\widehat{\pi_1(M)}$ and a (non-trivial) epimorphism $f:\pi_1(M)\to G$ for some finite group $G$. Set $\pi_1(M') = \ker f$ and $\displaystyle \pi_1(N')=\ker(\widehat{f}\circ \Phi)\cap\pi_1(N)$. The action of $\pi_1(M)$ on $\pi_1(M')$ by conjugation gives a homomorphism $\pi_1(M)\to \Aut(\pi_1(M'))$. In similar fashion, we obtain a compatible action of $\widehat{\pi_1(M)}$ on $\widehat{\pi_1(M')}$ giving a compatible homomorphism $\widehat{\pi_1(M)}\to \Aut(\widehat{\pi_1(M')})$. By \cite[Corollary 4.4.4 ]{RibesZalesskiiBook}, $ \Aut(\widehat{\pi_1(M')})$ is a profinite group because $\widehat{\pi_1(M)}$ is a finitely generated profinite group. Compatibility means there are canonical injections $\pi_1(M)\to\widehat{\pi_1(M)}$ and $\Aut(\pi_1(M'))\to \Aut(\widehat{\pi_1(M')})$ making the following diagram commute
\[
\begin{tikzcd}
  \pi_1(M) \arrow[r] \arrow[d]
    & \Aut(\pi_1(M')) \arrow[d] \\
  \widehat{\pi_1(M)} \arrow[r]
 &\Aut(\widehat{\pi_1(M')}) \end{tikzcd}
 \]
Any element of the group $G$ of covering automorphisms of $M'\to M$ will induce an automorphism of $\pi_1(M')$ which is conjugation by some element $g\in\pi_1(M)$. The element $g$ induces an automorphism of $\widehat{\pi_1(M')}$. Abusing notation, we denote by $\widehat{g}:\widehat{\pi_1(M')}\to\widehat{\pi_1(M')}$ the automorphism induced by $g$. 

We identify $\pi_1(M)$ with its image in $\widehat{\pi_1(M)}$ under the canonical embedding. Let $g \in \widehat{\pi_1(M)}$ and denote by $\varphi_g: \widehat{\pi_1(M')} \to \widehat{\pi_1(M')}$ be the automorphism induced by $g$. Since the subgroup $\Phi(\pi_1(N))$ is dense in $\widehat{\pi_1(M)}$, there exist a sequence $\{\alpha_n \} \subset \pi_1(N) $ such that $\Phi(\alpha_n) \to g$. We observe that
\[
\varphi_{\Phi(\alpha_n)} =\Phi \varphi_{\alpha_n} \Phi^{-1}
\]
where $\varphi_{\alpha_n}:\widehat{\pi_1(N')}\to\widehat{\pi_1(N')}$ is an automorphism induced by conjugation by $\alpha_n$. Therefore, the sequence $\Phi \varphi_{\alpha_n} \Phi^{-1}$ converges to $\varphi_g$.

Let $\widehat{D}(\pi_1(M'))<\widehat{\pi_1(M)}$ be the subgroup of $\widehat{\pi_1(M)}$ consisting of elements $g$ for which $\varphi_g:\widehat{\pi_1(M')}\to\widehat{\pi_1(M')}$ induces the identity isomorphism $\mathbf{1}:\widehat{H}_1({\pi_1(M')},\Z)_{\free}\to \widehat{H}_1({\pi_1(M')},\Z)_{\free}$. The group $\widehat{D}(\pi_1(M'))<\widehat{\pi_1(M)}$ is finite-index because it contains the finite-index subgroup of inner automorphisms. For large enough $n$, $\Phi\varphi_{\alpha_n}\Phi^{-1}$ and $\varphi_g$ will be in the same coset of $\widehat{D}(\pi_1(M'))<\widehat{\pi_1(M)}$ and so will induce the same map on first continuous profinite homology groups. Thus, for large enough $n$, we have the following commutative diagram
\[
\begin{tikzcd}
\widehat{H_1}(\pi_1(N'),\Z)_{\free}\arrow[r, "(\varphi_{\alpha_n})_*"] \arrow[d, "\Phi_*"] & \widehat{H_1}(\pi_1(N'),\Z)_{\free} \arrow[d, "\Phi_*"]\\
\widehat{H_1}(\pi_1(M'),\Z)_{\free}\arrow[r, "(\varphi_{g})_*"]  & \widehat{H_1}(\pi_1(M'),\Z)_{\free} 
\end{tikzcd}
\]
By \cref{thm:zhatregularity}, $\Phi_*=\mu\circ \widehat{h}$ where $\mu$ denotes the multiplication by a unit $\mu\in\widehat{\Z}^\times$ and $\widehat{h}$ is the profinite completion of a honest automorphism $H_1N'\to H_1M'$ as abstract groups. We can consider the module automorphism $1/\mu:\widehat{H_1}(\pi_1(M'),\Z)_{\free}\to\widehat{H_1}(\pi_1(M'),\Z)_{\free} $ which is multiplication by the inverse of $\mu$. Since $\widehat{\Z}$ is a commutative ring, the multiplication map commutes with other profinite automorphisms of $\widehat{H}_1(\pi_1(M');\Z)_{\free}$, and so we have the following commutative diagram

\[
\begin{tikzcd}
\widehat{H_1}(\pi_1(N'),\Z)_{\free}\arrow[r, "(\varphi_{\alpha_n})_*"] \arrow[d, "\Phi_*"] & \widehat{H_1}(\pi_1(N'),\Z)_{\free} \arrow[d, "\Phi_*"]\\
\widehat{H_1}(\pi_1(M'),\Z)_{\free}\arrow[r, "(\varphi_g)_*"] \arrow[d, "1/\mu"] & \widehat{H_1}(\pi_1(M'),\Z)_{\free} \arrow[d, "1/\mu"] \\
\widehat{H_1}(\pi_1(M'),\Z)_{\free}\arrow[r, "(\varphi_g)_*"] & \widehat{H_1}(\pi_1(M'),\Z)_{\free}
\end{tikzcd}
\]

By restriction, this diagram reduces to an analogous diagram at the discrete level showing that the conjugation $\alpha_n$ is a conjugate of $g$ by a fixed homomorphism obtained by restricting $1/\mu\circ \Phi_*$. 
\end{proof}

\subsection{When $M$ has a large $\SL_2$-character variety over $\mathbb{C}$}
For the next two sections, we use notation from \cite{BMRS1}. For a group $\Gamma$, and for a finite rational prime $p$, let 
\begin{align*}
    X(\Gamma,\C) &=\{\text{conjugacy classes of SL}(2,\C)\text{ representations of }\Gamma\}\\
    X(\Gamma,\overline{\Q_p}) &=\{\text{conjugacy classes of SL}(2,\overline{\Q_p})\text{ representations of }\Gamma\}\\
    X_b(\Gamma,\overline{\Q_p}) &=\{\text{conjugacy classes of bounded SL}(2,\overline{\Q_p})\text{ representations of }\Gamma\}\\
    X_c(\widehat{\Gamma},\overline{\Q_p}) &=\{\text{conjugacy classes of continuous SL}(2,\overline{\Q_p})\text{ representations of }\widehat{\Gamma}\}
\end{align*}

\begin{lemma}
\label{lem:0DetectCurve}
    Let $(M,N;\Phi)$ be a profinite pair in the Haken setting. If $\pi_1(M)$ has a curve in its $\SL(2,\C)$-character variety, then $\pi_1(N)$ also has a curve in its $\SL(2,\C)$-character variety. In particular, $N$ is also Haken. 
\end{lemma}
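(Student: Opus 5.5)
The plan follows the strategy of Bridson--McReynolds--Reid--Spitler \cite{BMRS1}: pass from complex representations to $p$-adic ones, and from $p$-adic ones to continuous representations of the profinite completion, where $\Phi$ can be applied. The guiding fact is that a curve in $X(\pi_1(M),\C)$ is the same as infinitely many conjugacy classes of $\SL(2,\C)$ representations. Since $X(\Gamma,\C)$ is an affine variety defined over $\overline{\Q}$, it has a positive-dimensional component exactly when it has infinitely many $\overline{\Q}$-points.

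\textbf{Step 1: pass to bounded $p$-adic representations of $\pi_1(M)$.} Assume $X(\pi_1(M),\C)$ contains a curve $C$. Then there are infinitely many characters in $C(\overline{\Q})$. Fix embeddings $\overline{\Q}\hookrightarrow\overline{\Q_p}$ to view these as points of $X(\pi_1(M),\overline{\Q_p})$. We want infinitely many of them to lie in $X_b(\pi_1(M),\overline{\Q_p})$, i.e.\ to have all traces integral over $\Z_p$.

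\begin{itemize}
\item If the trace functions on $C$ are not all constant, a trace function $\mathrm{tr}_\gamma$ is nonconstant on $C$.
\item Choose $p$ and points of $C(\overline{\Q})$ where every $\mathrm{tr}_\gamma$ is a $p$-adic integer. It suffices to check this for finitely many generators' traces, since these generate the coordinate ring.
\item Concretely, a curve over a number field has infinitely many points at which a finite set of regular functions are $p$-adically integral. One way to see this: take a $p$-adic neighborhood of a smooth $\overline{\Q}$-point on which the generating trace functions are bounded, then use density of algebraic points in that $p$-adic analytic neighborhood.
\end{itemize}

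This yields infinitely many distinct characters in $X_b(\pi_1(M),\overline{\Q_p})$. By the standard Bruhat--Tits fixed point argument, bounded representations can be conjugated into $\SL(2,\mathcal{O})$ for $\mathcal{O}$ the ring of integers of a finite extension, or at worst have bounded-trace semisimplifications. Such representations extend continuously to $\widehat{\pi_1(M)}$, giving infinitely many points of $X_c(\widehat{\pi_1(M)},\overline{\Q_p})$.

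\textbf{Step 2: transport through $\Phi$ and restrict back to $\pi_1(N)$.} Precomposition with $\Phi$ gives a bijection
\[X_c(\widehat{\pi_1(M)},\overline{\Q_p})\to X_c(\widehat{\pi_1(N)},\overline{\Q_p}).\]
Restricting to the dense subgroup $\pi_1(N)$ is injective on characters, because traces are continuous and determined on a dense subgroup. Hence $X_b(\pi_1(N),\overline{\Q_p})$, and therefore $X(\pi_1(N),\overline{\Q_p})$, is infinite.

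\textbf{Step 3: return to $\C$.} The character variety $X(\pi_1(N))$ is a scheme of finite type over $\Q$. If it had only finitely many $\overline{\Q_p}$-points, it would be zero-dimensional, hence would have finitely many $\C$-points. Since $\C\cong\overline{\Q_p}$ abstractly as fields, infinitely many $\overline{\Q_p}$-points force a positive-dimensional component. So $X(\pi_1(N),\C)$ contains a curve. Culler--Shalen \cite{CullerShalen} then gives that $N$ is Haken, using that $N$ is irreducible by \cite{WZ2} and a $\Q$H$S^3$ by \cref{cor:profinitecompletiondetectsH1}.

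\textbf{Main obstacle.} The hard part is Step 1: showing that infinitely many points of the curve yield bounded $p$-adic representations with distinct characters. The issue is that an arbitrary algebraic point may have non-integral traces at $p$. I would first try the $p$-adic density argument near a smooth point, choosing $p$ so that the curve is defined over $\Z_p$-integral data. Alternatively, I would cite the corresponding statement in \cite{BMRS1}, where exactly this "$X(\Gamma,\C)$ infinite $\Rightarrow$ $X_b(\Gamma,\overline{\Q_p})$ infinite" step is carried out.
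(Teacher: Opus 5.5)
Your proof is correct and runs through the same pipeline as the paper: complex characters, then bounded $\overline{\Q_p}$-representations, then continuous representations of the profinite completion via \cite[Lemma 4.3]{BMRS1}, then transport by $\Phi$, then restriction to $\pi_1(N)$, then Culler--Shalen. The key step, however, is organized differently.

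The paper argues by contradiction with a count. If $\pi_1(N)$ has only $n$ classes of $\SL(2,\C)$ representations, then $|X_b(\pi_1(N),\overline{\Q_p})|\le n$ for every $p$. It then picks $n+1$ points on the curve for $M$ and uses \cite[Lemma 4.2]{BMRS1} to get one prime $p$ and one isomorphism $\theta:\C\to\overline{\Q_p}$ making these finitely many representations simultaneously bounded. Because the prime may depend on $n$, only a finite-set boundedness statement is needed, and that is what the paper cites. Your direct version needs infinitely many bounded characters at a single fixed prime. This gives you a contradiction-free argument and a slightly stronger intermediate statement. The cost is an extra $p$-adic argument that the paper avoids; I would not count on citing that single-prime statement from \cite{BMRS1}.

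That extra argument works, but the order of choices in your sketch needs fixing. Boundedness of the trace functions near a point is not enough; you need integrality. Choosing $p$ so that ``the curve is defined over $\Z_p$-integral data'' does not give that: the curve $p\,xy=1$ is defined over $\Z$ but has no $\overline{\Z_p}$-points.

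The correct order is as follows.
\begin{itemize}
\item First fix a smooth $\overline{\Q}$-point $x_0\in C$.
\item Then take $p$ outside the finitely many rational primes below the denominators of its finitely many generating trace coordinates. Its coordinates then land in $\overline{\Z_p}$ under any embedding $\overline{\Q}\hookrightarrow\overline{\Q_p}$.
\item Work over a finite extension of $\Q_p$ containing these coordinates and the field of definition of $C$. By the $p$-adic implicit function theorem, $C$ has infinitely many points in a small $p$-adic neighbourhood of $x_0$.
\item By the ultrametric inequality, all of these points have integral traces.
\end{itemize}

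These points need not be algebraic, since $X_b$ allows arbitrary $\overline{\Q_p}$-points; your density argument would also recover algebraic ones. With this fix, the rest of your argument (Bruhat--Tits fixed point, continuous extension, transport by $\Phi$, injectivity of restriction to the dense subgroup, and the dimension count) goes through as written.
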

\begin{proof}
For a contradiction, assume that $N$ is non-Haken. Then, there are finitely many conjugacy classes of $\SL(2,\C)$ representations of $\pi_1(N)$. Otherwise, there exists a curve in the $\SL(2,\C)$ character variety of $\pi_1(N)$ which implies that $N$ is Haken by \cite[Theorem 2.2.1]{CullerShalen}. By \cite[Lemma 4.2]{BMRS1}, the number of conjugacy classes of bounded $\SL(2,\overline{\Q_p})$ representations of $\pi_1(N)$ is no more than the number of conjugacy classes of $\SL(2,\C)$ representations which is a finite number $n$. Thus, there at most $n$ such bounded representations for all primes $p$. By out assumption on $M$ there exists a curve $C$ in $\mathcal{X}(M)$, the $\SL(2,\C)$ character variety of $M$. Pick $n+1$ points on $C$ that correspond to $n+1$ pairwise non-conjugate representations $\phi_1,\dots,\phi_{n+1}:\pi_1(M) \to \SL(2,\C)$. By  \cite[Lemma 4.2]{BMRS1}, there is a prime $p$ and a field isomorphism $\theta:\C \to \overline{\Q_p}$ such that the $n+1$ representations $\theta_*(\phi_i)$ correspond to $n+1$ characters of bounded SL$(2,\overline{\Q_p})$ representations of $\pi_1(M)$. By \cite[Lemma 4.3]{BMRS1}, there is a natural bijection between conjugacy classes of bounded $\SL(2,\overline{\Q}_p)$-representations of any finitely generated group $\Lambda$ and conjugacy classes of continuous $\SL(2,\overline{\Q}_p)$-representations of $\widehat{\Lambda}$. Using this bijection, we obtain $n+1$ conjugacy classes of continuous representations of $\widehat{\pi_1(M)}$ and also of $\widehat{\pi_1(N)}$ by \cite{ns}. Using the bijection from \cite[Lemma 4.3]{BMRS1} again, we obtain $n+1$ conjugacy classes of bounded $\SL(2,\overline{\Q_p})$ representations of $\pi_1(N)$, a contradiction. Thus, $\pi_1(N)$ has infinitely many conjugacy classes of $\SL(2,\C)$ representations. It follows that there is a curve in the character variety of $\pi_1(N)$, and $N$ is therefore Haken by \cite[Theorem 2.2.1]{CullerShalen}.    
\end{proof}

\begin{remark}
    \cite[Theorem 11.1]{LiuArithmeticDetection} establishes a bijection between the $\SL(2,\Q^{ac})$ (where $\Q^{ac}$ is a fixed algebraic closure of $\Q$) characters of the fundamental groups of profinitely equivalent 3-manifolds. We do not use this theorem in \cref{lem:0DetectCurve} above, instead we rely on a direct counting argument. For examples of hyperbolic $\mathbb{Q}HS^3$ with a large $\SL(2,\mathbb{C})$-character variety, we refer the reader to \cref{section:examples}. 
\end{remark}

\subsection{When $M$ has a large $\SL_2$-character variety over the field with positive characteristic.}
In the case where $\F$ is an algebraically closed field of positive characteristic, and for a finitely generated group $\Gamma$ we follow
Garden--Tillmann \cite{ptilmanngarden}, Paoluzzi--Porti \cite{paoluzziporti} to define $\mathcal{X}(\Gamma,\F)=\{\rho:\Gamma\to \SL(2,\F)\}/\sim$ up to conjugation in $\SL(2,\F)$. The following lemma is standard.
\begin{lemma}\label{lem:pdetectcurve}
    If $\Gamma,\Delta$ are finitely generated, residually finite groups with $\Phi:\widehat{\Gamma}\to\widehat{\Delta}$ an isomorphism, then there is a $\Phi-$equivariant bijection $X(\Gamma,\F)\cong X(\Delta,\F)$.
\end{lemma}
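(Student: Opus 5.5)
The key point is that in positive characteristic $p$, with $\F$ algebraically closed, every finitely generated subgroup of $\SL(2,\F)$ is residually finite and lies in $\SL(2,K)$ for some finitely generated field $K$. The real input, however, is that we want to compare representations, not just images. My plan is to show that the representations of $\Gamma$ into $\SL(2,\F)$ correspond exactly to the continuous representations of $\widehat{\Gamma}$ into a suitable topological version of $\SL(2,\F)$, and then transport these across $\Phi$.

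\textbf{Step 1.} Let $\rho:\Gamma\to\SL(2,\F)$ be a representation and $S$ a finite generating set of $\Gamma$. Let $A\subset\F$ be the subring generated by the entries of $\rho(s)^{\pm1}$ for $s\in S$. Then $A$ is a finitely generated $\F_p$-algebra and a domain, and $\rho(\Gamma)\subset\SL(2,A)$. For every maximal ideal $\mathfrak m\subset A$, the field $A/\mathfrak m^k$ is finite (Nullstellensatz over $\F_p$). By Krull's intersection theorem, $\bigcap_{\mathfrak m,k}\mathfrak m^k=0$. Hence $\SL(2,A)$ embeds in the profinite group $\varprojlim\SL(2,A/I)$, taken over ideals $I$ of finite index, which is the completion $\SL(2,\widehat A)$. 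The composition $\Gamma\to\SL(2,\widehat A)$ then extends uniquely to a continuous $\widehat\rho:\widehat\Gamma\to\SL(2,\widehat A)$.

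\textbf{Step 2.} Compose $\widehat\rho\circ\Phi^{-1}$ and restrict to $\Delta$. This gives a homomorphism $\Delta\to\SL(2,\widehat A)$, and we must show it lands in $\SL(2,\F)$ after a suitable embedding. This is the main obstacle, because the image of $\Delta$ may only lie in the completion. What I would try first is to work with characters rather than representations. Choose finitely many trace functions $\mathrm{tr}\,\rho(w)$ whose values determine the character. The image of $\Delta$ is dense in $\widehat\rho(\widehat\Gamma)$, and this image equals the closure of $\rho(\Gamma)$. So $\mathrm{tr}$ of the images of $\Delta$ lies in the closure of the trace ring, and the values $\mathrm{tr}\,\widehat\rho\Phi^{-1}(\delta)$ generate a subring $B\subset\widehat A$.

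Because $\F$ is algebraically closed of characteristic $p$, it has transcendence degree large enough to contain the fraction field of $B$. One can embed $\mathrm{Frac}(B)$ (a countable field of characteristic $p$) into $\F$, provided $\F$ has infinite transcendence degree, or else use that $B$ is finitely generated, since $\Delta$ is. Having done so, I would realize the resulting character by a genuine representation $\Delta\to\SL(2,\F)$, which is possible because characters over an algebraically closed field are realized by representations. This defines a map $X(\Gamma,\F)\to X(\Delta,\F)$.

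\textbf{Step 3.} Running the same construction with $\Phi^{-1}$ gives an inverse map. The resulting bijection is $\Phi$-equivariant by construction, since $\widehat\rho\circ\Phi^{-1}$ restricted to $\Delta$ is compatible with $\Phi$. To see that conjugate representations go to conjugate representations, I would use the trace characterization: two semisimple representations with equal characters are conjugate. For non-semisimple representations I would pass to semisimplifications, or else state the bijection on characters, which is what the curve application in the paper needs.
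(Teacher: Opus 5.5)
\textbf{There is a genuine gap, in Step 2, which you yourself flag as the main obstacle; the proposed fix does not work.}

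\begin{enumerate}
\item Your $\widehat A$ is the completion over all finite-index ideals, so $\widehat A\cong\prod_{\mathfrak m}\widehat A_{\mathfrak m}$. When $A$ is infinite this is a product over infinitely many maximal ideals and is not a domain. Hence $B$ need not be a domain, and $\mathrm{Frac}(B)$ need not exist.
\item The claim that $\F$ ``has transcendence degree large enough to contain $\mathrm{Frac}(B)$'' is unjustified. An algebraically closed field of characteristic $p$ can have transcendence degree $0$ or $1$ over $\F_p$, while already $\F_q[[s]]$ has uncountable transcendence degree. Finite generation of $B$ makes $\mathrm{trdeg}\,B$ finite, but nothing forces $\mathrm{trdeg}\,B\le\mathrm{trdeg}\,\F$.
\item Even when an embedding $\mathrm{Frac}(B)\hookrightarrow\F$ exists, it is a choice, as is the factor of $\widehat A$ you project to. So you have not defined a map $X(\Gamma,\F)\to X(\Delta,\F)$. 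Nor is there any reason that running the construction with $\Phi^{-1}$ returns the class of $\rho$, so the inverse claimed in Step 3 is unsupported.
\item Retreating to characters or semisimplifications proves something weaker than the stated bijection on conjugacy classes of representations.
\end{enumerate}

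\textbf{The missing idea is the one the paper's proof rests on.} Take $\F=\overline{\F}_p$, the case the proof of the main theorem actually uses (it speaks of a large character variety over $\overline{\F}_p$).
\begin{itemize}
\item Your ring $A$ is then a finitely generated subring of $\overline{\F}_p$, hence a finite field $\F_q$. So $\rho(\Gamma)\subset\SL(2,\F_q)$ is finite and $\widehat A=A$.
\item $\widehat\rho$ is the unique continuous extension of a map to a finite group. By density of $\Delta$ in $\widehat\Delta$, the map $\widehat\rho\circ\Phi^{-1}|_\Delta$ has image exactly $\rho(\Gamma)\subset\SL(2,\F)$, so no embedding has to be chosen.
\item The assignment $\rho\mapsto\widehat\rho\circ\Phi^{-1}|_\Delta$ is a bijection $\mathrm{Hom}(\Gamma,\SL(2,\F))\to\mathrm{Hom}(\Delta,\SL(2,\F))$ with inverse $\psi\mapsto\widehat\psi\circ\Phi|_\Gamma$.
\item This bijection commutes with conjugation by $\SL(2,\F)$, so it descends directly to $X(\Gamma,\F)\cong X(\Delta,\F)$ without any semisimplification.
\end{itemize}

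Your worry about infinite images is legitimate when $\F$ has positive transcendence degree. For example, $1\mapsto\mathrm{diag}(t,t^{-1})$ gives an infinite-image representation $\Z\to\SL(2,\overline{\F_p(t)})$. So the paper's sentence ``the image of a finitely generated group in $\SL(2,\F)$ is finite'' tacitly takes $\F=\overline{\F}_p$. In that wider generality, however, your Step 2 does not supply an argument either.
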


\begin{proof}
Fix a group isomorphism $\Phi:\widehat{\Gamma}\to\widehat{\Delta}$. The image of a finitely generated group in $\SL(2,\F)$ is finite. Thus, every representation $\rho:\Gamma\to \SL(2,\F)$ has a unique extension $\widehat{\rho}:\widehat{\Gamma}\to \SL(2,\F)$. The homomorphism $\widehat{\rho}\circ \Phi^{-1}$ restricted to $\Delta$ has the same image as $\rho$ in $\SL(2,\F)$. For the reverse direction, for a homomorphism $\varphi:\Delta\to \SL(2,\F)$, $\widehat{\varphi}\circ\Phi$ restricted to $\Gamma$ is a homomorphism with the same image as $\varphi$. Since the conjugation action of $\SL(2,\F)$ preserves the isomorphism type of the groups, it follows that $X(\Gamma,\F)\cong X(\Delta,\F)$ via a $\Phi-$equivariant bijection.
\end{proof}

In the case of character varieties of 3-manifolds over fields with positive characteristic, we record that finite quotients detect embedded surfaces. 
\begin{lemma}\label{lem:pcurvehaken}
If $M$ is a Haken hyperbolic $\Q$H$S^3$ with a curve $C$ in $\mathcal{X}(\pi_1(M),\F)$ for an algebraically closed field $\F$ of positive characteristic, then $\mathcal{X}(\pi_1(N),\F)$ also contains a curve. In particular, $N$ is Haken. 
\end{lemma}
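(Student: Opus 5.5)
The plan is to transport the curve from $M$ to $N$ using \cref{lem:pdetectcurve}, and then invoke the positive-characteristic version of Culler--Shalen theory to conclude that $N$ is Haken.

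\emph{Step 1: a curve gives infinitely many classes.} Since $C\subset\mathcal{X}(\pi_1(M),\F)$ is a curve, it contains infinitely many points. These correspond to infinitely many pairwise non-conjugate representations $\rho_i:\pi_1(M)\to\SL(2,\F)$. One small point must be checked here. Points of the character variety are characters, not conjugacy classes, so we first pass to classes whose characters are distinct. Distinct characters certainly give non-conjugate representations, so the infinitude survives.

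\emph{Step 2: transport to $N$.} Apply \cref{lem:pdetectcurve} with $\Gamma=\pi_1(M)$, $\Delta=\pi_1(N)$ and the fixed isomorphism $\Phi^{-1}$. Every $\SL(2,\F)$ representation of a finitely generated group has finite image, because finitely generated subgroups of $\SL(2,\overline{\F_p})$ are finite. Hence each $\rho_i$ extends uniquely to a continuous $\widehat{\rho}_i$ on $\widehat{\pi_1(M)}$ and restricts to a representation of $\pi_1(N)$. This correspondence is a bijection on conjugacy classes. It also preserves traces in the sense that the character of the image on $\pi_1(N)$ is the pullback of $\widehat{\rho}_i$'s trace function. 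Consequently, distinct characters on $\pi_1(M)$ give distinct characters on $\pi_1(N)$: the elements of $\pi_1(N)$ are dense, and two continuous class functions on $\widehat{\pi_1(M)}$ that agree on a dense set agree everywhere. So $\mathcal{X}(\pi_1(N),\F)$ has infinitely many points.

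\emph{Step 3: a curve for $N$.} $\mathcal{X}(\pi_1(N),\F)$ is an affine algebraic set over $\F$, cut out by trace polynomials in a finitely generated group. It therefore has finitely many irreducible components. Having infinitely many points forces some component to have positive dimension, and so it contains a curve.

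\emph{Step 4: Haken.} $N$ is closed, irreducible and a $\Q$H$S^3$ by the reductions in \cref{3mfldsprel}. The positive-characteristic Culler--Shalen theory of Garden--Tillmann and Paoluzzi--Porti applies to it. Take a curve in $\mathcal{X}(\pi_1(N),\F)$, pass to its function field, and take an ideal point with a discrete valuation. This yields a nontrivial action of $\pi_1(N)$ on the Bruhat--Tits tree of $\SL_2$ over the completed function field, without a global fixed point. By Stallings/Serre this gives a nontrivial splitting, hence an essential surface, so $N$ is Haken.

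The main obstacle is Step 4. One must check that the valuation at an ideal point produces an action with no global fixed point. This is the usual argument: some trace becomes unbounded at the ideal point. The cited positive-characteristic papers should supply this, and I would quote their theorem directly rather than reprove it. A secondary subtlety is the care with characters versus conjugacy classes in Steps 1--2; the density argument above handles it.
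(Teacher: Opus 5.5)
Your proposal is correct and follows the paper's proof. Both transport characters from $\pi_1(M)$ to $\pi_1(N)$ via \cref{lem:pdetectcurve}, then apply the positive-characteristic Culler--Shalen theory (Garden--Tillmann's Theorem 24 with Culler--Shalen's Proposition 2.3.1) to get an essential surface in $N$.

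Your Steps 2--3 add something the paper leaves implicit. Distinct characters stay distinct by density of $\pi_1(N)$ and local constancy of the traces, and infinitely many points force a positive-dimensional component. This is what turns a set-theoretic bijection into a curve. Like the paper, though, your finite-image claim holds only for $\F=\overline{\F_p}$. A general algebraically closed $\F$ of characteristic $p$ reduces to that case, since the character variety is defined over $\F_p$ and its dimension does not change under extension of algebraically closed fields.
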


\begin{proof}
By \cref{lem:pdetectcurve}, there is a curve of characters in $\mathcal{X}(\pi_1(N),\F)$ since $X(\pi_1(N),\F)\cong X(\pi_1(M),\F)$. Applying \cite[Theorem 24]{ptilmanngarden} and \cite[Proposition 2.3.1]{CullerShalen}, we conclude that $N$ is Haken. 
\end{proof}

\subsection{The case of non-integral trace}
In this subsection, we prove \cref{thm:mainhakentheorem} (5).  
\begin{prop}\label{keyintegralcountprop}
Let $\Gamma$ be a group with $|X(\Gamma,\C)|<\infty$. Every $\SL(2,\C)$ representation of $\Gamma$ has integral trace iff $|X(\Gamma,\C)|=|X_b(\Gamma,\overline{\Q_p})|$ for all finite rational primes $p$.
\end{prop}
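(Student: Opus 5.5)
The plan is to compare, prime by prime, the finite set $X(\Gamma,\C)$ with the subset of characters that become bounded after transporting to $\overline{\Q_p}$. Since $|X(\Gamma,\C)|<\infty$, every character variety component is a point. So each $\SL(2,\C)$ representation is conjugate to one defined over $\overline{\Q}$, with traces in a number field. This gives a finite set of characters $\chi_1,\dots,\chi_n$ with traces algebraic over $\Q$. Fix a field isomorphism $\theta:\C\to\overline{\Q_p}$; the algebraic closure of $\Q$ in $\C$ is carried into $\overline{\Q}\subset\overline{\Q_p}$. Every $\SL(2,\overline{\Q_p})$ representation of $\Gamma$ is of the form $\theta_*\rho$ for some $\SL(2,\C)$ representation $\rho$. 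Hence $\theta_*$ induces a bijection $X(\Gamma,\C)\to X(\Gamma,\overline{\Q_p})$, using that conjugacy classes of (at least irreducible) representations are determined by characters.

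The key input is the standard criterion (Bass/Serre, as in \cite[Lemma 4.2]{BMRS1} and the tree argument of \cite{SerreTrees}). A representation $\Gamma\to\SL(2,\overline{\Q_p})$ of a finitely generated group is bounded, i.e.\ conjugate into $\SL(2,\mathcal{O})$ for the valuation ring of a finite extension, iff all its traces are $p$-adically integral. The forward direction is immediate. Conversely, integral traces prevent a hyperbolic element on the Bruhat--Tits tree, which forces a global fixed point. Reducible representations need a separate treatment: one replaces them by the semisimplification and notes that the diagonal characters are units iff integral.

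Thus $|X_b(\Gamma,\overline{\Q_p})|$ equals the number of $\chi_i$ whose traces $\theta(\mathrm{tr}\,\chi_i(\gamma))$ lie in $\overline{\Z_p}$ for all $\gamma$. Consequently $|X(\Gamma,\C)|=|X_b(\Gamma,\overline{\Q_p})|$ for every $p$ iff every $\chi_i$ has traces that are $p$-integral for every $p$. For an algebraic number $\tau$, being integral over $\Z$ is equivalent to being integral at every prime of the number field. As $\theta$ varies over embeddings, this is equivalent to $\theta(\tau)\in\overline{\Z_p}$ for all $p$ and all embeddings. Since the count is independent of $\theta$ (one can apply the argument to each choice), this gives both directions. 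For the forward direction: integral traces make all characters bounded at every $p$, so the counts agree. Conversely, if the counts agree at every $p$, then every $\chi_i$ is bounded at every $p$, so every trace is an algebraic integer.

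The main obstacle is making the counting bijection precise. I expect two issues. The first is that the conjugacy-class-versus-character distinction matters for reducible (non-semisimple) representations; I would handle this by working with characters throughout, as \cite{BMRS1} does. The second is that the definition of $X_b$ depends on a choice of $\theta$, which must not affect the count. For the latter, I would note that $|X_b|$ is intrinsic to $\overline{\Q_p}$-representations, and that the Galois action on algebraic traces permutes the $\chi_i$ while preserving integrality.
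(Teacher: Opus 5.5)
Your proposal is correct and follows essentially the same route as the paper. The forward direction is the Bass--Serre fixed-point criterion (integral traces force bounded image, as in the proof of \cite[Lemma 4.5]{BMRS1}) combined with the bijection $\theta_*\colon X(\Gamma,\C)\to X(\Gamma,\overline{\Q_p})$; your converse is the contrapositive of the paper's, which completes the trace field at a prime $\mathfrak{p}$ in the denominator of a non-integral trace to produce an unbounded $\SL(2,\overline{\Q_p})$ representation. Your remark that the set of representations is $\Aut(\C)$-stable, so that one $\theta$ per prime $p$ sees every prime of the trace field, makes explicit a step the paper leaves implicit.
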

\begin{proof}
If every representation of $\Gamma$ has integral traces, then by the proof of \cite[Lemma 4.5 ]{BMRS1}, $|X(\Gamma,\overline{\Q_p})|=|X_b(\Gamma,\overline{\Q_p})|$ for all finite rational primes $p$. By fixing an identification between $\C$ and $\overline{\Q_p}$, $|X(\Gamma,\overline{\Q_p})|=|X(\Gamma,\C)|$ for all finite rational primes.
\medbreak\noindent For the converse, if $\Gamma$ has a representation $\rho:\Gamma\to \SL(2,\C)$ with non-integral trace, then for some finite rational prime $p$, there is a representation $\phi:\Gamma\to\,$SL$(2,\overline{\Q_p})$ that is not bounded. To see this, let $\alpha$ be the non-integral trace of some element $\rho(\gamma)$ for some $\gamma\in\Gamma$. For some prime $\frak p$ of the trace field $k=k_{\rho(\Gamma)}$, which is in the denominator of $\rho(\gamma)$, the valuation $\nu_\frak{p}(\alpha)>1$. By completing the field $k$ at $\frak{p}$, we get a representation $\rho_\frak{p}:\Gamma\to \,$SL$(2,k_{\nu_\frak{p}})$ which is not bounded. The prime $\frak{p}$ lies above a finite rational prime $p$, and therefore $k_{\nu_\frak{p}}$ is a finite extension of $\Q_p$. Thus, the representation $\rho_\frak{p}$ is an unbounded representation $\Gamma\to \,$SL$(2,\overline{\Q_p})$. For this prime $p$, $|X_b(\Gamma,\overline{\Q_p}|<|X(\Gamma,\C)|$. 
\end{proof}

\begin{lemma}\label{lem:integralcount}
    Let $\Gamma$ be a group with finitely many conjugacy classes of $\SL(2,\C)$ representations. For $\Delta$ with $\widehat{\Delta}\cong\widehat{\Gamma}$, $\Gamma$ has an $\SL(2,\C)$ representation with non-integral trace iff $\Delta$ has an $\SL(2,\C)$ representation with non-integral trace. 
\end{lemma}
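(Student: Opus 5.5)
The plan is to reduce everything to counting, using \cref{keyintegralcountprop} together with the profinite invariance of bounded $p$-adic characters from \cite[Lemma 4.3]{BMRS1}. Assume $\widehat{\Delta}\cong\widehat{\Gamma}$ (both finitely generated, residually finite, as in all our applications).

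\textbf{Step 1: the bounded counts agree.} For every finite rational prime $p$, \cite[Lemma 4.3]{BMRS1} gives a natural bijection $X_b(\Gamma,\overline{\Q_p})\leftrightarrow X_c(\widehat{\Gamma},\overline{\Q_p})$. By Nikolov--Segal \cite{ns}, an isomorphism $\widehat{\Gamma}\cong\widehat{\Delta}$ is continuous, so it identifies continuous representations of the two completions. Applying the same bijection to $\Delta$ yields
\[|X_b(\Gamma,\overline{\Q_p})|=|X_b(\Delta,\overline{\Q_p})|\quad\text{for all } p.\]

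\textbf{Step 2: $\Delta$ also has finitely many $\SL(2,\C)$ classes.} We show $|X(\Delta,\C)|<\infty$ by the counting argument of \cref{lem:0DetectCurve}. Suppose $\Delta$ had infinitely many classes, and choose $n+1$ pairwise non-conjugate ones, where $n=|X(\Gamma,\C)|$. By \cite[Lemma 4.2]{BMRS1} there is a prime $p$ and a field isomorphism $\C\cong\overline{\Q_p}$ making all $n+1$ of them bounded. Then
\[|X_b(\Delta,\overline{\Q_p})|\ge n+1>n\ge |X_b(\Gamma,\overline{\Q_p})|,\]
where the last inequality holds because a field isomorphism carries $X_b(\Gamma,\overline{\Q_p})$ injectively into $X(\Gamma,\C)$. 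This contradicts Step 1.

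\textbf{Step 3: apply \cref{keyintegralcountprop}.} The criterion of \cref{keyintegralcountprop} says that all traces are integral iff $|X(\cdot,\C)|=|X_b(\cdot,\overline{\Q_p})|$ for every $p$. By Step 1 the right-hand sides agree for $\Gamma$ and $\Delta$, so the remaining point is $|X(\Gamma,\C)|=|X(\Delta,\C)|$. Whenever the character set is finite, \cite[Lemma 4.2]{BMRS1} produces a single prime $p$ at which all of the finitely many characters become simultaneously bounded. This gives $|X(\Gamma,\C)|=\max_p|X_b(\Gamma,\overline{\Q_p})|$, and likewise for $\Delta$. Since the bounded counts agree prime by prime (Step 1), the maxima agree, so $|X(\Gamma,\C)|=|X(\Delta,\C)|$.

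The equivalence now follows from \cref{keyintegralcountprop} applied to both groups. Concretely, $\Gamma$ has a non-integral trace iff $|X_b(\Gamma,\overline{\Q_p})|<\max_q|X_b(\Gamma,\overline{\Q_q})|$ for some $p$. This condition depends only on the bounded counts, which Step 1 shows are profinite invariants.

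\textbf{Main obstacle.} The delicate point is the claim in Step 3 that all finitely many characters can be made bounded at one common prime $p$. This is exactly how \cite[Lemma 4.2]{BMRS1} is used in \cref{lem:0DetectCurve}, for finitely many representations at once. One must check that its formulation really allows an arbitrary finite family, which holds because the traces generate a finitely generated ring and one can avoid the finitely many bad primes. A second thing to verify is that conjugacy-class counts, rather than character counts, are the quantities preserved by the bijections. Any discrepancy there would be handled by working consistently with the notation of \cite{BMRS1}.
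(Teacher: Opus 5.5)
Your proof is correct and follows the paper's argument. Step 1 is the paper's chain of equalities through \cite[Lemma 4.3]{BMRS1}, and Step 3 is the same application of \cref{keyintegralcountprop}; the only difference is that the paper cites \cite[Proposition 4.1]{BMRS1} for $|X(\Gamma,\C)|=|X(\Delta,\C)|$, whereas you derive this equality yourself, together with the finiteness of $X(\Delta,\C)$ needed to apply \cref{keyintegralcountprop} to $\Delta$, from \cite[Lemma 4.2]{BMRS1} via the counting argument of \cref{lem:0DetectCurve}, which makes the argument more self-contained.
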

\begin{proof}
    If $\Gamma$ has an SL$(2,\C)$ representation with non-integral trace, there is a rational prime $p$ for which $|X_b(\Gamma,\overline{\Q_p})|<|X(\Gamma,\C)|$ by \cref{keyintegralcountprop}. By Lemma 4.3 \cite{BMRS1}, for all finite rational primes $l$ ,$$|X_b(\Delta,\overline{\Q_l})|=|X_c(\widehat{\Delta},\overline{\Q_l})|=|X_c(\widehat{\Gamma},\overline{\Q_l})|=|X_b(\Gamma,\overline{\Q_l})|$$
    since $\widehat{\Gamma}\cong\widehat{\Delta}$. By Proposition 4.1 \cite{BMRS1}, $|X(\Gamma,\C)|=|X(\Delta,\C)|$. Therefore, $|X_b(\Delta,\overline{\Q_p})|<|X(\Delta,\C)|$, and by \cref{keyintegralcountprop}, $\Delta$ has an SL$(2,\C)$ representation with non-integral trace. The same argument works for the other direction. 
\end{proof}

\begin{lemma}\label{lem:noninttraceLemma}
    Let $M$ be a hyperbolic $\Q$H$S^3$ with $0$-dimensional $\SL(2,\C)-$character variety, and a representation $\rho:\pi_1(M)\to \SL(2,\C)$ with non-integral trace. Then $N$ is Haken. 
\end{lemma}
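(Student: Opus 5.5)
The plan is to transfer the hypothesis on $M$ to $N$ using \cref{lem:integralcount}, and then apply Culler--Shalen theory to $N$. Set $\Gamma=\pi_1(M)$ and $\Delta=\pi_1(N)$, with $\Phi$ the fixed isomorphism of profinite completions.

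\emph{Step 1: reduce to the case that $\Delta$ has finitely many characters.} Suppose $N$ is not Haken. By \cite[Theorem 2.2.1]{CullerShalen}, the $\SL(2,\C)$-character variety of $\Delta$ then contains no curve, so it is $0$-dimensional. Hence $|X(\Delta,\C)|<\infty$. We do not actually need this finiteness in order to invoke \cref{lem:integralcount}, since that lemma only assumes finiteness for $\Gamma$. Still, it is consistent with \cite[Proposition 4.1]{BMRS1}, which gives $|X(\Gamma,\C)|=|X(\Delta,\C)|$ and is used inside the proof of \cref{lem:integralcount}. It is also convenient to keep it in view.

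\emph{Step 2: transfer the non-integral trace.} The hypothesis says $X(\Gamma,\C)$ is finite and $\Gamma$ has a representation $\rho$ with a non-integral trace. By \cref{lem:integralcount}, $\Delta$ then has a representation $\rho':\Delta\to\SL(2,\C)$ with some trace $\operatorname{tr}\rho'(\delta)$ that is non-integral. Since $X(\Delta,\C)$ is finite, each point of it is an isolated point of an affine variety defined over $\Q$. So every character is defined over $\overline{\Q}$, and the trace $\operatorname{tr}\rho'(\delta)$ is an algebraic number that is not an algebraic integer. After conjugating, we may take $\rho'$ to be defined over a number field $K$.

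\emph{Step 3: produce a splitting of $\Delta$.} As in the proof of \cref{keyintegralcountprop}, choose a prime $\mathfrak p$ of $K$ such that $\nu_{\mathfrak p}(\operatorname{tr}\rho'(\delta))<0$. Then $\rho'(\Delta)\subset \SL(2,K_{\mathfrak p})$ is unbounded. The group $\SL(2,K_{\mathfrak p})$ acts on the Bass--Serre tree of $K_{\mathfrak p}$. Because $\rho'(\delta)$ has trace of negative valuation, it acts hyperbolically on this tree. Therefore $\Delta$ acts on the tree without a global fixed point. By \cite[Theorem 5.2.7]{MaclachlanReidBook}, which rests on \cite{SerreTrees}, equivalently by the Culler--Shalen construction, $\Delta$ has a nontrivial splitting.

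\emph{Step 4: pass from the splitting to a surface.} The manifold $N$ is irreducible: by Theorem A of \cite{WZ2}, $N$ is irreducible because $M$ is. Stallings' correspondence \cite{StallingsSurface} (see also \cite[Proposition 2.3.1]{CullerShalen}) turns the splitting into an essential embedded surface in $N$. So $N$ is Haken, which contradicts the assumption in Step 1. Alternatively, Steps 3 and 4 hold without assuming $N$ is non-Haken, so the conclusion also follows directly.

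\emph{Main obstacle.} The delicate point is Step 2. We must make sure that \cref{lem:integralcount} really applies. This means checking that the counting of bounded $\overline{\Q_p}$-representations in \cref{keyintegralcountprop} matches for $\Gamma$ and $\Delta$, and this matching relies on \cite[Lemma 4.3, Proposition 4.1]{BMRS1}. We must also make sure that the resulting non-integral trace for $\Delta$ is algebraic, so that the $\mathfrak p$-adic tree argument in Step 3 can be run. I would handle the algebraicity by the $0$-dimensionality from Step 1: the character variety is defined over $\Q$, so its isolated points have algebraic coordinates.
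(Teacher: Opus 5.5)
Your proposal is correct and follows essentially the same route as the paper. You transfer the non-integral trace from $\pi_1(M)$ to $\pi_1(N)$ via \cref{lem:integralcount}, then apply \cite[Theorem 5.2.7]{MaclachlanReidBook} (the $\mathfrak{p}$-adic Bass--Serre tree argument that you write out in Steps 3--4) to get a nontrivial splitting and hence an essential surface; your justification that the transferred trace is algebraic fills in a point the paper asserts without comment, though the contradiction framing of Step 1 is optional, since $|X(\pi_1(M),\C)|=|X(\pi_1(N),\C)|<\infty$ from \cite[Proposition 4.1]{BMRS1} already gives that finiteness directly.
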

\begin{proof}
    Since $M$ has a $0$-dimensional character variety, every SL$(2,\C)$ representation of $\pi_1(M)$ has algebraic traces. By the hypothesis, $\pi_1(M)$ has an SL$(2,\C)$ representation with non-integral trace, and therefore $\pi_1(N)$ has an algebraic SL$(2,\C)$ representation with non-integral trace by \cref{lem:integralcount} above. The 3-manifold $N$ is Haken by \cite[Theorem 5.2.7]{MaclachlanReidBook} (see also \cite[Theorem 6.5]{Bass}). This concludes the proof of \cref{lem:noninttraceLemma}. 
\end{proof}

\begin{proof}[Proof of \cref{thm:mainhakentheorem}]
    \cref{prop:EffectivePropertyH} shows that having infinite dihedral quotient is equivalent to Property $\mathcal{H}$ for $\Q$H$S^3$. \cref{cor:detecting_EVF_Dihedral_H} shows that if $M$ satisfies either (1), (2) or (3) then $N$ must also satisfies either (1), (2) or (3) respectively. The fact that $M$ has a large character variety over $\mathbb{C}$ or over $\overline{\mathbb{F}}_p$ implies that $N$ has a large character variety over $\mathbb{C}$ or over $\overline{\mathbb{F}}_p$, respectively is proven in \cref{lem:0DetectCurve} and \cref{lem:pdetectcurve}.
    This concludes the proof of \cref{thm:mainhakentheorem}.
\end{proof}

\section{Closed hyperbolic $\mathbb{Q}HS^3$s with Haken profinite genus}\label{section:examples}

In this section, we give examples of closed hyperbolic $\mathbb{Q}HS^3$s that satisfy criteria in \cref{thm:mainhakentheorem}. In particular, the profinite genus among 3-manifold groups of these examples consists only of Haken 3-manifolds. 

\subsection{Calegari--Dunfield tower of hyperbolic $\mathbb{Q}HS^3$}

In \cite{CalegariDunfield}, F. Calegari and Dunfield constructed a tower of arithmetic $\Q HS^3$
\begin{equation*}
    N_0 \leftarrow N_1 \leftarrow N_2 \leftarrow N_3 \leftarrow \dots
\end{equation*}
such that $\displaystyle \bigcap_{n=0}^\infty N_n = \{1\}$, and remarkably each $N_n$ is a Haken hyperbolic $\Q HS^3$, see \cite[Theorem 1.4]{CalegariDunfield} also \cite{BostonEllenberg}. The lifts to $\SL(2,\C)$ of the discrete faithful representations of the $\pi_1(N_n)$ have integral traces. However, Calegari-Dunfield show that each of the manifolds $N_n$ are Haken. See Section 2.10 \cite{CalegariDunfield} for details which we use in the proof below.

\begin{lemma}
    Let $L_n$ be a tower of hyperbolic  $\Q HS^3$ profinitely equivalent to the Calegari-Dunfield tower (Theorem 1.4 \cite{CalegariDunfield}). The manifolds $L_n$ are Haken for all $n$. 
    \begin{proof}
        By \cite[Section 2.10]{CalegariDunfield}, $N_n$ is a finite-sheeted regular cover of an orbifold $M_1$ (in the notation of \cite[Section 2.10]{CalegariDunfield}) with underlying space $\R P^3\#\R P^3\# L(4,1)$, and hence there exists an epimorphism $f:\pi_1(M_1) \to \Z/2\Z\star\Z/2\Z$. The image of $f(\pi_1(N_n))$ in $\Z/2\Z\star\Z/2\Z$ has no non-trivial homomorphism to $\Z$ since $N_n$ is a $\Q HS^3$. Therefore, $f(\pi_1(N_n))$ is isomorphic to an infinite dihedral group. By \cref{lem:EVF_Detection}, every manifold profinitely equivalent to $N_n$ is Haken. 
    \end{proof}
\end{lemma}

\noindent We suspect that the kernel of $f:\pi_1(N_n) \to \Z/2\Z\star\Z/2\Z$ is not finitely-generated and that the dual surface to $f$ corresponds to a separating totally geodesic surface in $N_n$.  


\subsection{$\Q HS^3$ with large $\SL_2$-character variety over $\C$}

In a discussion \cite{RubermanOverflow}, D. Ruberman gives the following construction of hyperbolic rational homology spheres with positive-dimensional $\SL_2(\C)$ character varieties constructed using invertible homology cobordisms. Let $K_1,K_2$ be non-trivial knots in $S^3$. The rational homology sphere $S(K_1,K_2)$ called the splice of $K_1$ and $K_2$ is obtained by gluing one knot complement meridian to the other knot complement longitude. By \cite[Theorem 2.6]{ruberman}, there is a hyperbolic rational homology sphere $M$ with an invertible homology cobordism to $S(K_1,K_2)$. This means that there exists two 4-manifolds $W$ and $W'$ such that $\partial W = \partial W' = M \sqcup S(K_1,K_2)$ and furthermore $W \cup_{M} W' \cong S(K_1,K_2) \times [0,1]$. The collapsing map of $S(K_1,K_2) \times [0,1]$ to $S(K_1,K_2)$ gives a degree one map $M\to S(K_1,K_2)$ which induces an epimorphism of the fundamental groups, see also \cite[Corollary 2.7]{RubermanOverflow}. Via this epimorphism, the character variety of $\pi_1(S(K_1,K_2))$ is included into the character variety of $\pi_1(M).$ In \cite{ruberman}, Ruberman proves that for any closed orientable 3-manifold $L$ there exists a closed orientable hyperbolic 3-manifold $M$ and an invertible homology cobordism from $M$ to $L$ \cite[Theorem 2.6]{ruberman}. For $L = S(K_1,K_2)$, $M$ is a hyperbolic $\Q HS^3$.

It remains to see that there exists $S(K_1,K_2)$ with large $\SL_2(\C)$- character variety. In fact, every spliced sum $S(K_1,K_2)$ of two nontrival knots $K_1$ and $K_2$ in $S^3$ has the property that every component of the $\SL_2(\C)$-character variety has positive dimension \cite[Corollary 3.3]{BodenCurtis2008}. 

For some explicit examples of computation of the character variety of a spliced sum of knots, see \cite[Section 4, 4.3]{KN}. Following Kitano--Nozaki \cite{KN}, we can choose $K_1,K_2$ to both be isotopic to the figure 8 knot, and then $\pi_1(S(K_1,K_2))$ will have a curve in its character variety. By the argument in the proceeding paragraphs, any hyperbolic rational homology sphere $M$ invertibly homology cobordant to $S(K_1,K_2)$ will have a curve in its character variety, and any closed 3-manifold $N$ profinitely equivalent to $M$ must be Haken by \cref{lem:0DetectCurve}. 

\subsection{$\Q HS^3$ with large $\SL_2$-character variety over $\overline{\F}_p$}

In Section 6.4 \cite{ptilmanngarden}, Garden-Tillmann give a remarkable example of a hyperbolic $\Q$H$S^3$ {\tt m188(2,3)} (a Dehn-filling on a census \cite{SnapPy} manifold) with a curve in its characteristic 2 character variety and no curves in the character variety for any other characteristics including zero. A 3-manifold profinitely equivalent to {\tt m188(2,3)} is Haken by \cref{lem:pcurvehaken}. On further inspection, the manifold {\tt m188(2,3)} has a non-trivial homomorphism from its fundamental group to $\Z/2\Z\star\Z/2\Z$, and therefore it has Property $\mathcal{H}$.

\subsection{Haken $\Q HS^3$ from the Hodgson--Weeks census}

In this section, we describe some computer experiments with manifolds from the Hodgson--Weeks census \cite{HodgsonWeeksCensus}. These computations were performed using {\tt Regina} \cite{regina}, {\tt SnapPy} \cite{SnapPy}, {\tt SageMath} \cite{sagemath}, the {\tt Docker} image of SnapPy in SageMath developed by Dunfield as well as the {\tt SnapPyNT} extension program by Nicholas Rouse \cite{NickSnapPyNT}. The code and data can be found among the ancillary files of the arxiv post of this paper.  

\medskip\noindent{\bf Filtering Haken $\Q HS^3$.} There have been effort made toward determining Haken manifolds in the Hodgson--Weeks census in connection to the Virtual Haken Conjecture \cites{DunfieldThurstonHakenExperiment,DunfieldHakenHWCensus} and to effective algorithms in computational 3-manifold \cites{burton2025computingclosedessentialsurfaces, BurtonTillmann2025ComputingHaken}. Unable to obtain the data mentioned in \cite[Section 6]{burton2025computingclosedessentialsurfaces}, we re-compute this data using Regina \cite{regina}, see {\tt HakenCensus.rga} in the ancillary files. Roughly, if the number of tetrahedra in the triangulation of the manifold contained in the census is less than 20, then we test if the manifold is Haken or not. From \numCensus examples of orientable examples in the Hodgson Weeks census, we found at least \numHaken Haken 3-manifolds. From this data, we found \numHakenQHS examples that are $\Q HS^3$. We refer to the list of these \numHakenQHS Haken $\Q HS^3$ as {\tt Haken\_QHS\_List}. 

For each manifold $M$ in {\tt Haken\_QHS\_List}, we determine if $\pi_1(M)$ admits the infinite dihedral group as a quotient and if $M$ has a curve in its $\SL_2(\C)$-character variety using the following procedures. 

\medskip\noindent{\bf Infinite dihedral quotient.} To test for the infinite dihedral quotient $D_\infty$, we test if $M$ admits a double cover with positive first betti number. If $M$ has no double cover or all double covers are $\Q HS^3$, then $\pi_1(M)$ does not have any infinite dihedral quotient. We rule out \numNoDihedralQuotient $\Q HS^3$ from {\tt Haken\_QHS\_List} from having $D_\infty$ as a quotient by looking at double covers of $M$. If $\pi_1(M)$ pass the previously described double cover test, we search for an epimorphism from $\pi_1(M)$ to $D_\infty$ taking advantage of the fact that most of these fundamental groups are 2-generated. If $\pi_1(M)$ is 2-generated, say by $a$ and $b$, then we test the following assignment
\[
\begin{cases}
    a &\mapsto x \\
    b &\mapsto y
\end{cases},
\begin{cases}
    a &\mapsto x \\
    b &\mapsto xy
\end{cases},
\begin{cases}
    a &\mapsto xy \\
    b &\mapsto x
\end{cases},
\begin{cases}
    a &\mapsto x \\
    b &\mapsto y
\end{cases},
\]
where $x,y$ come from the follow presentation $D_\infty = \langle x,y\mid x^2,y^2 \rangle$. Up conjugation in $D_\infty$ and exchanging $x$ and $y$, any epimorphism to $D_\infty$ must take the form of one of the homomorphism above. When $\pi_1(M)$ is 3-generated, we test homomorphisms obtained from short words in $D_\infty$ and from solving a certain system of equations over $\mathbb{Z}$ coming from identifying $D_\infty$ with a subgroup of the affine group of the line generated by
\[
t \mapsto -t \quad\text{and}\quad t\mapsto-t + 1
\]
As the final result, we found \numDihedralQuotient out of \numHakenQHS examples in {\tt Haken\_QHS\_List} whose fundamental group admits the infinite dihedral group as a quotient. In other words, the double cover test found all examples of 3-manifolds from {\tt Haken\_QHS\_List} whose fundamental group has no infinite dihedral quotients.  

\begin{remark}
We could choose to rule out $D_\infty$ quotient by ruling out finite dihedral $D_{2p}$ of order $2p$ for primes $p$ up to some threshold. This test will not rule out anymore examples. If all double covers of $M$ are $\Q HS^3$, then $\pi_1(M)$ cannot admit a finite dihedral quotient $D_{2p}$ for some prime $p$ larger than the size of $|H_1(M';\Z)|$ where $M'$ range over all double covers of $M$. We implement the double cover test since it is more efficient.    
\end{remark}
\medskip\noindent{\bf Large character variety.} To test for a curve in the $\SL_2(\C)$-character variety, we use the routine 
comes with using {\tt SnapPy} in {\tt sage} \cites{SnapPy,sagemath}. 
\begin{verbatim}
    import snappy
    M = snappy.Manifold("m004")
    G = M.fundamental_group()
    I = G.character_variety_vars_and_polys("as_ideals")
    I.dimension()
\end{verbatim}
which returns the defining ideal of the character variety of $\pi_1(M)$ in terms of the trace functions of $\{a,b,ab\}$ for $2$-generated group and of $\{a,b,c,ab,ac,bc,abc\}$ for 3-generated group. The computation of the character variety and of the dimension of an ideal is generally expensive. We only managed to obtain the defining ideals for \numWithIdeal examples. Of those \numWithIdeal examples, we computed the dimension of the character variety for \numWithDimensionComputed examples. We determine that there are \numWithDimensionOne examples with at least one component of dimension one and \numWithDimensionZero examples with zero-dimensional character variety.

\medskip\noindent{\bf Non-integral traces.} We now verify that certain hyperbolic $\Q HS^3$s in the Hodgson--Weeks closed orientable census have non-integral traces and therefore have a profinite genus consisting of Haken 3-manifolds by \cref{thm:mainhakentheorem}.
\begin{theorem}\label{hakenincensus}
    Let $M$ be one of the following census hyperbolic 3-manifolds: 
    \begin{align*}
        &{\tt m015(8, 1), m019(3, 4), m026(-5,2), m036(-4, 3), m040(-4, 3), m007(5,3)} \\
       & {\tt m037(4, 3), m034(5,2), 
        m082(1, 3),  
        m145(1,3), m070(-3,2), m067(-3,2).}
    \end{align*}
If $N$ is profinitely equivalent to $M$, then $N$ is Haken.
\begin{proof}
    All of these manifolds are known to have discrete faithful representations with non-integral traces. We check this using a script using the SnapPyNT \cite{NickSnapPyNT} extension program developed by Nicholas Rouse. 
\end{proof}
\end{theorem}
We highlight some other interesting examples that have appeared in the literature. 

\begin{example} The manifold {\tt m015(8,1)} is a hyperbolic Haken $\Q HS^3$ with zero-dimensional character variety. The manifold can also be obtained from doing $10/1$-Dehn surgery on the knot $5_2$. The character corresponds to the discrete faithful representation is algebraic non-integral. The details of this example are written down in \cite[Example 5.2.8 (2)]{MaclachlanReidBook}. In particular, the square of the trace of the image of a meridian element of $\pi_1(S^3\setminus 5_2)$ in the fundamental group of the $10/1$ Dehn surgery is non-integral with a minimal polynomial $2x^4-17x^3+46x^2-40x+8$.   
\end{example}

\begin{example}
The manifold {\tt m019(3,4)} is a hyperbolic Haken $\Q HS^3$ with zero-dimensional $\SL_2$-character variety over $\mathbb{C}$. Garden--Tillmann observed that {\tt m019(3,4)} contains no curve of irreducible $\SL_2$-characters over $\overline{\mathbb{F}}_p$ for all positive characteristics \cite[Section 6.3]{ptilmanngarden}. Thus the essential surface in {\tt m019(3,4)} is not detected in any characteristics. The first homology $H_1(M) \cong \mathbb{Z}/40 \mathbb{Z}$ is finite cyclic, and so {\tt m019(3,4)} has no infinite dihedral quotient. Nevertheless, {\tt m019(3,4)} has $\SL_2(\C)$-character that is algebraic non-integral. Thus, it has Haken profinite genus.       
\end{example}

\begin{example}
    The manifold {\tt m140(4,1)} is an arithmetic hyperbolic Haken $\Q HS^3$ with zero-dimensional $\SL_2(\C)$-character variety. It also admits an infinite dihedral quotient and hence there exists a curve of $\PSL_2(\C)$-character variety for {\tt m140(4,1)}. We could further determine that {\tt m140(4,1)} contains an embedded virtual fiber. This is done using Regina to cut along normal surfaces and check whether each connected component has fundamental group isomorphic to the fundamental group of a closed non-orientable surface \cite{regina}.  
\end{example}

\begin{example}
    Another remarkable example is the manifold {\tt m188(2,3)} which is a hyperbolic Haken $\Q HS^3$. The $\SL_2$-character variety of this manifold has a curve of irreducible characters only in characteristic zero and no curve of irreducible characters in any other characteristics including zero \cite[Section 6.4]{ptilmanngarden}. We also observe that the fundamental group of {\tt m188(2,3)} admits an infinite dihedral quotient. 
\end{example}

\section{Conclusion: Remarks and questions on the general case}\label{conclusion}

The argument in \cref{lem:EVF_Detection} suggests the following question whose affirmative answer would imply an affirmative answer to \cref{hakenquestion}. 

\begin{question}\label{surjfree}
Let $M,N$ be profinitely equivalent (finite-volume) hyperbolic 3-manifolds and let $\varphi:\pi_1(M)\twoheadrightarrow F_r$ be an epimorphism to a non-abelian free group. Fix a group isomorphism $\Phi:\widehat{\pi_1(M)}\to \widehat{\pi_1(N)}$. Is $\varphi\circ \Phi^{-1}(\pi_1(N))$ free?
\end{question}

\noindent It is immediate that this group $\varphi\circ \Phi^{-1}(\pi_1(N))$ is a finitely generated dense subgroup of the free profinite group. A positive answer to this question will imply a positive answer to \cref{hakenquestion} as the following proposition shows.

\begin{prop}
Let $M$ be a Haken, hyperbolic $\Q HS^3$ and let $N$ be a 3-manifold with $\widehat{\pi_1(M)}\cong\widehat{\pi_1(N)}$. If there is a positive answer to \cref{surjfree}, then $N$ is Haken. 
\end{prop}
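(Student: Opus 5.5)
Since $M$ is Haken, it contains an essential embedded surface $\Sigma$, and $\pi_1(M)$ splits nontrivially along $\pi_1(\Sigma)$ (Stallings, Serre). The plan is to turn this splitting into an epimorphism onto a non-abelian free group in some finite-index subgroup, transport it to $N$ through $\Phi$ using \cref{surjfree}, and read off a nontrivial action of a subgroup of $\pi_1(N)$ on a tree. By the reduction in \cref{3mfldsprel}, we may assume $N$ is closed, irreducible and hyperbolic; Wilton--Zalesskii Theorems A and B give irreducibility, atoroidality and geometric detection. A nontrivial action of a finite-index subgroup alone only yields virtual Haken, so we must descend to $N$ itself.

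\textbf{Producing the free quotient.} If $\Sigma$ is a virtual fiber, \cref{lem:EVF_Detection} already gives the result, so assume $\Sigma$ is quasi-Fuchsian. The most robust route is to invoke largeness of hyperbolic 3-manifold groups (a consequence of Agol's virtual specialness, \cite{AgolHaken}). This gives a finite-index normal subgroup $\pi_1(M')$ and an epimorphism $\varphi\colon \pi_1(M')\twoheadrightarrow F_r$ with $r\ge 2$. Alternatively, use Agol's theorem to find a cover in which $\Sigma$ lifts to a non-separating surface, and then build a free quotient from two disjoint non-parallel lifts. I would choose $\varphi$ so that its restriction is compatible with the given splitting, with $\Sigma$ appearing as a preimage of an edge midpoint of the rose. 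Let $N'$ be the corresponding cover of $N$, so that $\Phi$ restricts to an isomorphism $\widehat{\pi_1(N')}\cong\widehat{\pi_1(M')}$. \cref{surjfree}, applied to $(M',N')$, shows that $\widehat{\varphi}\circ\Phi(\pi_1(N'))$ is a free group $F$. It is nontrivial and in fact non-abelian, since it is a finitely generated dense subgroup of $\widehat{F_r}$ and its abelianization must be dense in $\widehat{\Z}^r$. Hence $\pi_1(N')$ acts nontrivially on the Cayley tree of $F$, and therefore $N'$ is Haken.

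\textbf{Descending to $N$ (main obstacle).} To get an essential surface in $N$ itself, I would make the choice $G$-equivariant. Take $\varphi$ to be the restriction of a splitting of $\pi_1(M)$, that is, of the action of $\pi_1(M)$ on the Bass--Serre tree $T$ of $\Sigma$. The quotient of $\pi_1(M')$ acting on $T$ then factors through the fundamental group of a finite graph, which is free, so $\varphi$ extends to $\pi_1(M)\to\pi_1(\mathcal{G})$ for a finite graph of finite groups $\mathcal{G}$ with $\pi_1(\mathcal{G})$ virtually $F_r$. I would then apply \cref{surjfree} on the finite-index piece and use \cref{lem:2ZGmodules}-style equivariance, as in the second proof of \cref{lem:EVF_Detection}. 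The aim is to show that the image of $\pi_1(N)$ in $\widehat{\pi_1(\mathcal{G})}$ is a virtually free group containing the free image of $\pi_1(N')$ with finite index. Virtually free groups act on trees with finite stabilizers, and since the image is infinite, this action is nontrivial. Pulling it back gives a nontrivial action of $\pi_1(N)$ on a tree without inversions, so $N$ contains an essential embedded surface by \cite{StallingsSurface,SerreTrees}; that is, $N$ is Haken.

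The delicate points are two: constructing the equivariant free quotient from the quasi-Fuchsian surface, and checking that the image of $\pi_1(N)$ is virtually free. For the latter I would argue that a group containing a finite-index free subgroup is virtually free by Stallings' theorem, which keeps the argument purely group-theoretic.
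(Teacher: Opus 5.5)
Your descent paragraph is, in substance, the paper's proof. Since $H_2(M;\Z)=0$, $\Sigma$ separates $M=M_1\cup_\Sigma M_2$. For $\pi_1(M')\triangleleft\pi_1(M)$ with quotient $Q$, let $K$ be the normal closure of the $\pi_1(M')$-vertex stabilizers of the Bass--Serre tree. Then $\pi_1(M)/K$ is exactly $A*_CB$, where $A,B,C$ are the images in $Q$ of $\pi_1(M_1),\pi_1(M_2),\pi_1(\Sigma)$; this is the paper's virtually free quotient $\rho$. From there the paper argues as you do:
\begin{itemize}
\item apply \cref{surjfree} to $\rho$ restricted to $\pi_1(M')$;
\item note that the image of $\pi_1(N)$ contains the free image of $\pi_1(N')$ with index at most $[\pi_1(N):\pi_1(N')]$;
\item use Scott--Wall (Karrass--Pietrowski--Solitar) to get a nontrivial tree action with finite stabilizers.
\end{itemize}
The finite-index claim needs no \cref{lem:2ZGmodules}-type equivariance; it is immediate. ``Virtually free'' is then just the definition, not Stallings' theorem. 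The largeness detour can be dropped.

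The gap is the step you flag as delicate and leave open. You never choose $\pi_1(M')$ so that $A*_CB$ is infinite with \emph{non-abelian} free finite-index subgroups, which is what \cref{surjfree} requires. For an arbitrary finite-index normal subgroup, e.g.\ one coming from largeness, nothing guarantees this. If $C=A$ or $C=B$, then $\pi_1(\mathcal G)=A*_CB$ is finite and nothing is transported to $N$.

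The missing input is separability, which the paper supplies. By LERF (Agol), $\pi_1(\Sigma)$ is separable in $\pi_1(M)$. A quasi-Fuchsian $\pi_1(\Sigma)$ has infinite index in each $\pi_1(M_i)$, since otherwise $M_i$ would be a twisted $I$-bundle and $\Sigma$ a virtual fiber. Hence you can choose $Q$ with $C$ proper in both $A$ and $B$, and of index at least $3$ in one of them. Then $A*_CB$ is virtually non-abelian free. If both indices were $2$, $C$ would be normal with quotient $D_\infty$, and \cref{thm:mainhakentheorem}(3) would apply instead.

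Your alternative of ``two disjoint non-parallel'' non-separating lifts also needs this separability input to exist. Moreover, non-parallel is not enough to give rank at least $2$: two such lifts can be homologous. You need them to be jointly non-separating.
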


\begin{proof}
By \cref{thm:mainhakentheorem}, we can assume that $M$ contains an embedded quasiFuchsian surface $\Sigma$. Then $M=M_1\cup_\Sigma M_2$, and $\pi_1(M)=\pi_1(M_1)*_{\pi_1(\Sigma)}\pi_1(M_2)$. The image of $\pi_1(M_i)$ for $i=1,2$ in some finite quotient $Q$ are (non-trivial) finite groups $A,B$ with a common subgroup $C$ which is the image of $\pi_1(\Sigma)$. Since $\pi_1(M)$ is LERF, we may assume that $C$ is a proper subgroup of $A$ and $B$. Thus, there is an epimorphism to a virtually free group $\rho:\pi_1(M)\twoheadrightarrow A*_CB$. There is therefore a finite-index subgroup $\pi_1(M')<\pi_1(M)$ for which $\rho(\pi_1(M'))$ is a free group. A positive answer to \cref{surjfree} will mean that the image of $\widehat{\rho}(\overline{\pi_1(M)}\cap \pi_1(N))$ will be a free group, and as such, the group $\widehat{\rho}(\pi_1(N))$ will be virtually free. It then follows that $\pi_1(N)$ will act non-trivially on the Bass-Serre tree corresponding to the graph of groups decomposition of $\widehat{\rho}(\pi_1(N))$ with all vertex groups finite and having bounded order (Theorem 7.3 \cite{ScottWall}), and so $N$ will be Haken. 
\end{proof}

Furthermore, in the general case, the observation of \cref{lem:2ZGmodules} allows us to consider what happens for $\Sigma\subset M$ a quasiFuchsian essential embedded surface. We can pass to a finite-sheeted regular cover $M'\to M$ where all the lifts of $\Sigma$ are non-separating and necessarily disjoint. Fixing one such lift $\Sigma'$, we get a collection of lifts $g\Sigma$ for $g\in\pi_1(M)/\pi_1(M')$ and non-trivial homology classes $[g\Sigma]\in H_2(M';\Z)$. We observe that this collection of homology classes is linearly dependent because $\sum_{g\in G}[g\Sigma]\in H_2(M';\Z)$ is $G=\pi_1(M)/\pi_1(M')$-invariant and $M$ is a $\Q $H$S^3$, so $\sum_g[g\Sigma]=0$. For $N$ profinitely equivalent to $M$, we have a corresponding finite-sheeted cover $N'$ with a corresponding (via a group ring isomorphism in \cref{lem:2ZGmodules}) class $[\alpha]\in H_2(N';\Z)$ such that $\sum_{g\in G}[g\alpha]=0$. We are unable to directly guarantee from integral homology classes that, for example, the least area surface representatives for $[\alpha]$ and $[g\alpha]$ will be disjoint for all $g\in G=\pi_1(M)/\pi_1(M')=\pi_1(N)/\pi_1(N')$. 

\medbreak\noindent For the next question, we recall the following definition:
\begin{defn}
    A tame knot $K\subset S^3$ is {\bf small} if its knot exterior $E_K=S^3\setminus n(K)$ (a compact 3-manifold with toroidal boundary components) has no closed embedded essential surface.
\end{defn}
\begin{question}[Youheng Yao]
    If $K$ is a small knot and $\widehat{\pi_1}(E_K)\cong\widehat{\pi_1}(E_{K'})$ for some other knot $K'\subset S^3$, is $K'$ small? 
\end{question}
\noindent One can show \cite[Theorem 1.3]{CY} that there is a profinite alignment of strongly detected boundary slopes of $K$ and $K'$ and that Dehn surgery along all but the finitely many boundary slopes of the knot $K$ will yield non-Haken 3-manifolds. More needs to be said to guarantee that there is no closed embedded essential surface in $E_{K'}$. 

\section{Appendix: Property $\mathcal{H}$ and aspherical mod 2 homology 3-spheres}

We wish to record here that there are lots of examples of hyperbolic $\Q $H$S^3$s where there is no fixed class up to sign in a finite-sheeted regular cover. These examples are detailed below. In view of the fact that the virtual Betti number of these rational homology spheres is infinite (a consequence of the Virtual Special Theorem \cite{AgolHaken}) the following observations about the set of first Betti numbers of regular covers for a class of 3-manifolds indicate that there might be some restrictions on the sets of possible Betti numbers of finite regular covers for rational homology 3-spheres.

\begin{theorem}\label{mainthm}
    Let $M$ be an aspherical integer homology $3-$sphere and let $M'$ be a finite-sheeted regular cover of $M$. The first Betti number $b_1(M')\ne 1,2,3$. 
\end{theorem}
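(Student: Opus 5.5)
The plan is to study the action of the deck group on the rational homology of the cover and show that this action is forced to be trivial when $b_1(M')\le 3$. Let $G=\pi_1(M)/\pi_1(M')$ be the deck group and set $b=b_1(M')$. Then $G$ acts on the lattice $L=H_1(M';\Z)_{\free}\cong\Z^b$, and hence on $V=L\otimes\Q=H_1(M';\Q)$. This gives a homomorphism $\rho:G\to\GL_b(\Z)$ with finite image.

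\textbf{Step 1 (no invariants).} I will first show that $V^G=0$. For a finite regular cover, the transfer map identifies $H_1(M;\Q)$ with the coinvariants of $V$. Over $\Q$, the coinvariants are isomorphic to the invariants $V^G$ by averaging over $G$. Since $M$ is an integral homology sphere, $H_1(M;\Q)=0$, and therefore $V^G=0$. This is the same transfer reasoning used in \cref{rem:Ruberman}, carried out in degree one instead of degree two.

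\textbf{Step 2 (the image is perfect).} Next, $\pi_1(M)$ is perfect, because $H_1(M;\Z)=0$. The group $G$ is a quotient of $\pi_1(M)$, so $G$ is perfect, and so is its image $\rho(G)$. Hence $\rho(G)$ is a finite perfect subgroup of $\GL_b(\Z)$.

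\textbf{Step 3 (small ranks force trivial image).} For $b\le 3$, every finite subgroup of $\GL_b(\Z)$ is solvable. Concretely:
\begin{enumerate}
\item $\GL_1(\Z)=\{\pm1\}$;
\item finite subgroups of $\GL_2(\Z)$ are cyclic or dihedral, of order dividing $12$;
\item finite subgroups of $\GL_3(\Z)$ embed in one of the maximal finite ones, which have order $48$ or $24$. An example is the hyperoctahedral group $\{\pm1\}\times S_4$. All of these groups are solvable.
\end{enumerate}
A perfect solvable group is trivial, so $\rho(G)=1$. Then $V=V^G=0$, which contradicts $b\ge 1$.

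The main obstacle is Step 3 in rank $3$: I have to justify that the finite subgroups of $\GL_3(\Z)$ are solvable. The standard route is to observe that a finite subgroup of $\GL_3(\Z)$ preserves a positive definite form. It is therefore conjugate into $O(3)$, and contains $\pm I$ up to index $2$. Its image in $SO(3)$ is cyclic, dihedral, $A_4$, $S_4$, or $A_5$. The case $A_5$ is excluded by crystallographic restriction: an element of order $5$ in $\GL_3(\Z)$ would need a characteristic polynomial with a factor $\Phi_5$, which has degree $4$. This leaves only solvable groups. Alternatively, I can simply cite the classical classification of the $3$-dimensional integral point groups.

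Asphericity of $M$ is used only to exclude degenerate cases; for instance, the Poincaré sphere has only finite covers, all with $b_1=0$. It does not enter the argument beyond that. Sharpness of the value $4$ is a separate construction and is not needed for the stated inequality.
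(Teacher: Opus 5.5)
Your proof is correct and follows essentially the same route as the paper: show the deck group has no nonzero invariants (you use the rational transfer where the paper uses inflation--restriction in $H^1$), observe that the image of $G$ in $\GL_b(\Z)$ is perfect, and rule out nontrivial finite perfect subgroups of $\GL_b(\Z)$ for $b\le 3$. For that last step you argue via $SO(3)$ and crystallographic restriction where the paper cites Tahara's table, and you treat $b=1$ uniformly instead of through the mod $2$ lemma. One harmless slip: finite subgroups of $\GL_2(\Z)$ have order dividing $8$ or $12$ rather than $12$, since the square lattice has a symmetry group of order $8$; this does not affect the solvability conclusion.
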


\begin{theorem}\label{secondthm}
    Let $M$ be an aspherical mod 2 homology $3-$sphere and let $M'$ be a finite-sheeted regular cover of $M$. The first Betti number $b_1(M')\ne 1$. 
\end{theorem}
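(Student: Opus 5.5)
The plan is a direct homological argument: the deck group must act on the one-dimensional free part of $H^1(M';\Z)$ through $\{\pm 1\}$, and both possible actions will contradict the mod 2 homology sphere hypothesis. Let $G=\pi_1(M)/\pi_1(M')$ be the deck group and suppose, for contradiction, that $b_1(M')=1$. First record what the hypothesis on $M$ gives. Since $H_1(M;\Z/2\Z)=0$, the universal coefficient theorem shows that $H_1(M;\Z)$ is finite of odd order. Hence $b_1(M)=0$, so $M$ is a $\Q$H$S^3$, and $\pi_1(M)$ admits no epimorphism onto $\Z/2\Z$.

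Next, look at the action of $G$ on $V:=H^1(M';\Z)/\mathrm{torsion}$, where $G$ acts by deck transformations. By assumption $V\cong\Z$, and $\Aut(\Z)=\{\pm1\}$, so the action is recorded by a homomorphism $\chi:G\to\{\pm1\}$. In other words, a generator $\phi\in V$ is fixed up to sign by $G$ (equivalently, its Poincaré dual in $H_2(M';\Z)$ is, so $M$ would have Property $\mathcal{H}$). I then split into two cases according to whether $\chi$ is trivial.

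If $\chi$ is trivial, then $\phi\otimes\Q\in H^1(M';\Q)$ is $G$-invariant and nonzero. I will invoke the standard transfer argument, as in \cref{rem:Ruberman}. With rational coefficients, pullback identifies $H^1(M;\Q)$ with the $G$-invariants $H^1(M';\Q)^G$; concretely, the composite of transfer followed by pullback is $x\mapsto\sum_{g\in G}g\cdot x$, which sends $\phi$ to $|G|\phi\neq0$. This gives $H^1(M;\Q)\neq 0$, contradicting $b_1(M)=0$.

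If $\chi$ is nontrivial, then $\ker\chi$ has index two in $G$. Its preimage in $\pi_1(M)$ is therefore an index-two subgroup, which yields an epimorphism $\pi_1(M)\twoheadrightarrow\Z/2\Z$. This forces $H_1(M;\Z/2\Z)\neq0$, a contradiction. Alternatively, in this case \cref{prop:EffectivePropertyH} gives a $D_\infty$ quotient, and $D_\infty$ itself surjects onto $\Z/2\Z$.

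There is no serious obstacle in this argument. The only point needing care is the first case: one must make sure the invariance is used rationally (or on the torsion-free quotient), so that torsion in $H^1(M';\Z)$ causes no trouble. Asphericity of $M$ does not appear to be needed for this statement.
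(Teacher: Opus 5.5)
Your argument is correct and is essentially the paper's proof. Your Case 1 is \cref{infrestriction}, which the paper proves with the inflation--restriction sequence and the torsion of $H^2(G,\Z)$ instead of the transfer. Your Case 2 is \cref{squares}, where the paper expresses the lack of an index-two quotient as $G=G^{(2)}$, so a class fixed up to sign is actually fixed. You are also right that asphericity is not needed, since only $H^1$ enters and it agrees with group cohomology for any space.
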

We include a proof of a fact that is a well known consequence of the fact that every Sol 3-manifold is an Anosov semibundle (Proposition 12.7.6 \cite{Martelli}).
\begin{cor}\label{SOLVnomod2}
    There is no mod 2 homology 3-sphere that admits Sol geometry. 
    \begin{proof}
    If $M$ is a mod 2 homology 3-sphere with Sol geometry, as noted in the proof of \cref{structuralreduction}, we can pass to a finite-sheeted regular cover $M'\to M$ which is an Anosov torus bundle over $S^1$. However, we can check that $b_1(M')=1$, and that contradicts \cref{secondthm}.
    \end{proof}
\end{cor}

\begin{lemma}\label{infrestriction}
    Let $M$ be an aspherical 3-manifold with $b_1(M)=0$, and let $M'$ be a finite regular cover of $M$. Then $H^1(\pi_1(M'),\Z)^G=0$ where $G=\pi_1(M)/\pi_1(M')$. 
    \begin{proof}
    We consider the inflation-restriction exact sequence in group cohomology (see Proposition 3.3.17 \cite{GilleSzamuelyCSA} for example)
    $$0\to H^1(G,\Z)\to H^1(\pi_1(M),\Z)\to H^1(\pi_1(M'),\Z)^G\to H^2(G,\Z)\to H^2(\pi_1(M),\Z)$$
    Since $H^1(\pi_1(M),\Z)=0,$ then the torsion-free $H^1(\pi_1(M'),\Z)^G$ injects into $H^2(G,\Z)$ which is torsion because $G$ is finite (see Corollary 1.32 \cite{milneCFT} for example). Thus, $H^1(\pi_1(M'),\Z)^G=0$. 
    \end{proof}
\end{lemma}

\begin{lemma}\label{squares}
    Let $M$ be an aspherical mod 2 homology 3-sphere and let $M'$ be a finite regular cover of $M$. Then the $G=\pi_1(M)/\pi_1(M')$ action on $H^1(M';\Z)$ has no fixed class up-to-sign. 
    \begin{proof}
    The quotient of $\pi_1(M)$ by the subgroup $\pi_1(M)^{(2)}$ generated by squares is an elementary abelian 2-group. Since $M$ is assumed to be a mod 2 homology 3-sphere, $\pi_1(M)/\pi_1(M)^{(2)}$ is trivial. If there is a non-trivial class $\alpha\in H^1(M';\Z)$ that is fixed up to sign by $G$, then $G^{(2)}$ fixes $\alpha$. In this case however $G=G^{(2)}$, and this contradicts \cref{infrestriction}. 
    \end{proof}
\end{lemma}
\noindent With this, we may now complete the proof of \cref{mainthm} and \cref{secondthm}. 
\begin{proof}{(\cref{mainthm} \& \cref{secondthm})}
    Assuming $M'$ has $b_1=1$ and $M$ is a mod 2 homology sphere, then the $G=\pi_1(M)/\pi_1(M')$ action on $H^1(M';\Z)$ is fixed up to sign, contradicting \cref{squares}.  If $b_1(M')=n$, then the action on homology gives a homomorphism $\pi_1(M)\to \GL(n,\Z)$ with no 1-dimensional eigenspaces. In particular, the image of $\pi_1(M)$ in $\GL(n,\Z)$ must be non-trivial and finite. When $M$ is an integer homology sphere, $\pi_1(M)$ is perfect, and so the image of $\pi_1(M)$ is a finite perfect non-trivial subgroup of $\GL(n,\Z)$. For $n=2$ and $n=3$, there is no such subgroup (see the table on pg. 170 \cite{TaharaGL3Z}).
\end{proof}

\subsection{An example of a hyperbolic $\Z HS^3$ with finite-sheeted $b_1=4$ regular covers} We can perform Dehn surgery on the one-cusped manifold denoted by {\tt v3541} in the census of cusped 3-manifolds \cite{CHWeeks}. The manifold {\tt v3541(5,1)} has trivial integral homology and two $A_5$ covers with $b_1=4$. We can confirm this with the following scripts using SnapPy \cite{SnapPy} and Magma \cite{Magma}. First, in SnapPy \cite{SnapPy}
\medskip
\begin{verbatim}
    M=Manifold(`v3541')
    M.dehn_fill((5,1))
    M.homology()
    M.fundamental_group().magma_string()
\end{verbatim}
\medskip\noindent
Then in Magma \cite{Magma}
\begin{verbatim}
    G<a,b,c>:=Group<a,b,c|b*b*c*b^-1*a^-1*a^-1*c,
    a*c*a^-1*b^-1*c^-1*a*c*a^-1*b^-1*c^-1*a*c*b*b,
    a*c*c*c*c*c*b^-1*a*b*b>;
    AQInvariants(G);
    H:=LowIndexNormalSubgroups(G,60);
    for a in H do
        AQInvariants(a`Group);
    end for;
\end{verbatim}

\bibliography{main}

\begin{bibdiv}
\begin{biblist}

\bib{AgolHaken}{article}{
      author={Agol, I.},
       title={{The virtual Haken conjecture (with an appendix by I. Agol, D. Groves, and J. Manning)}},
        date={2013},
     journal={Documenta Math.},
      volume={{\bf 18}},
}

\bib{Bass}{article}{
      author={Bass, H.},
       title={{Groups of integral representation type}},
        date={1980},
     journal={Pacific J. Math.},
      volume={{\bf 86(1)}},
       pages={15–51},
}

\bib{regina}{misc}{
      author={Burton, Benjamin~A.},
      author={Budney, Ryan},
      author={Pettersson, William},
      author={others},
       title={Regina: Software for low-dimensional topology},
         how={{\tt http://\allowbreak regina-normal.\allowbreak github.\allowbreak io/}},
        date={1999--2025},
}

\bib{BodenCurtis2008}{article}{
      author={Boden, Hans~U.},
      author={Curtis, Cynthia~L.},
       title={Splicing and the {${\rm SL}_2(\Bbb C)$} {C}asson invariant},
        date={2008},
        ISSN={0002-9939,1088-6826},
     journal={Proc. Amer. Math. Soc.},
      volume={136},
      number={7},
       pages={2615\ndash 2623},
         url={https://doi.org/10.1090/S0002-9939-08-09233-2},
}

\bib{Magma}{article}{
      author={Bosma, W.},
      author={Cannon, J.},
      author={Playoust, C.},
       title={The {M}agma algebra system. {I}. {T}he user language},
        date={1997},
        ISSN={0747-7171},
     journal={J. Symbolic Comput.},
      volume={{\bf 24}},
      number={3-4},
       pages={235\ndash 265},
         url={http://dx.doi.org/10.1006/jsco.1996.0125},
        note={Computational algebra and number theory (London, 1993)},
}

\bib{BostonEllenberg}{article}{
      author={Boston, N.},
      author={Ellenberg, J.S.},
       title={Pro-p groups and towers of rational homology 3-spheres},
        date={2006},
     journal={Geom. Top.},
      volume={10},
       pages={331\ndash 334},
}

\bib{BMRS1}{article}{
      author={Bridson, M.R.},
      author={McReynolds, D.B.},
      author={Reid, A.W.},
      author={Spitler, R.},
       title={Absolute profinite rigidity and hyperbolic geometry},
        date={2020},
     journal={Annals of Math.},
      volume={{\bf 192}},
       pages={679\ndash 719},
}

\bib{BR}{article}{
      author={Bridson, M.R.},
      author={Reid, A.W.},
       title={Profinite rigidity, fibering, and the figure-eight knot},
        date={2020},
     journal={What's Next?: The Mathematical Legacy of William P. Thurston, Annals of Math. Study 205, PUP},
       pages={45\ndash 64},
}

\bib{bridsonfixedpt}{article}{
      author={Bridson, M.R.},
       title={{Profinite isomorphisms and fixed-point properties}},
        date={2024},
     journal={Algebr. Geom. Topol.},
      volume={24},
       pages={4103\ndash 4114},
}

\bib{burton2025computingclosedessentialsurfaces}{misc}{
      author={Burton, Benjamin~A.},
      author={Tillmann, Stephan},
       title={Computing closed essential surfaces in 3-manifolds},
        date={2025},
         url={https://arxiv.org/abs/1812.11686},
}

\bib{BurtonTillmann2025ComputingHaken}{article}{
      author={Burton, Benjamin~A.},
      author={Tillmann, Stephan},
       title={Computing closed essential surfaces in 3-manifolds},
        date={2025},
        ISSN={2367-1726,2367-1734},
     journal={J. Appl. Comput. Topol.},
      volume={9},
      number={3},
       pages={Paper No. 18, 42},
         url={https://doi.org/10.1007/s41468-025-00208-w},
}

\bib{BoyerZhang}{article}{
      author={Boyer, S.},
      author={Zhang, X.},
       title={{On Culler-Shalen seminorms and Dehn filling}},
        date={1998},
     journal={Annals of Math.},
      volume={{\bf 148}},
       pages={737–801},
}

\bib{CalegariDunfield}{article}{
      author={Calegari, F.},
      author={Dunfield, N.M.},
       title={Automorphic forms and rational homology 3-spheres},
        date={2006},
     journal={Geom. Top.},
      volume={10},
       pages={295\ndash 329},
}

\bib{SnapPy}{misc}{
      author={Culler, M.},
      author={Dunfield, N.M.},
      author={Goerner, M.},
      author={Weeks, J.R.},
       title={Snap{P}y, a computer program for studying the geometry and topology of $3$-manifolds},
         how={Available at http://snappy.computop.org (10/11/2021)},
}

\bib{CHWeeks}{article}{
      author={Callahan, P.J.},
      author={Hildebrand, M.V.},
      author={Weeks, J.R.},
       title={{A census of cusped hyperbolic 3-manifolds}},
        date={1999},
     journal={Math. Comp.},
      volume={68},
       pages={321\ndash 332},
}

\bib{Cthesis}{thesis}{
      author={Cheetham-West, T.},
       title={{Finite quotients of hyperbolic 3-manifold groups}},
        type={Ph.D. Thesis},
        date={2023},
        note={unpublished thesis},
}

\bib{CLRS}{article}{
      author={Cheetham-West, T.},
      author={Lubotzky, A.},
      author={Reid, A.W.},
      author={Spitler, R.},
       title={{Property FA is not a profinite property}},
        date={2025},
     journal={Groups, Geom. Dyn.},
      volume={{\bf 19(3)}},
       pages={1081\ndash 1087},
}

\bib{CY}{article}{
      author={Cheetham-West, T.},
      author={Yao, Y.},
       title={{Finite covers and strict boundary slopes of cusped hyperbolic 3-manifolds}},
        date={2025},
     journal={arXiv \tt{arXiv:2506.12289}},
}

\bib{DFPR}{article}{
      author={Dixon, J.D.},
      author={Formanek, E.W.},
      author={Poland, J.C.},
      author={Ribes, L.},
       title={Profinite completions and isomorphic finite quotients},
        date={1982},
     journal={J. Pure Appl. Algebra},
      volume={23(3)},
       pages={227\ndash 231},
}

\bib{DunfieldThurstonHakenExperiment}{article}{
      author={Dunfield, Nathan~M.},
      author={Thurston, William~P.},
       title={The virtual {H}aken conjecture: experiments and examples},
        date={2003},
        ISSN={1465-3060,1364-0380},
     journal={Geom. Topol.},
      volume={7},
       pages={399\ndash 441},
         url={https://doi.org/10.2140/gt.2003.7.399},
}

\bib{DunfieldHakenHWCensus}{misc}{
      author={Dunfield, Nathan~M.},
        date={1999},
         url={https://nmd.web.illinois.edu/slides/haken_slides.pdf},
        note={University of Warwick, Slide talk},
}

\bib{GilleSzamuelyCSA}{book}{
      author={Gille, P.},
      author={Szamuely, T.},
       title={{Central Simple Algebras and Galois Cohomology}},
      series={Cambridge Studies in Advanced Mathematics},
   publisher={Cambridge University Press},
        date={2006},
}

\bib{ptilmanngarden}{article}{
      author={Garden, G.},
      author={Tilmann, S.},
       title={{An invitation to Culler-Shalen theory in arbitrary characteristic}},
        date={2024},
     journal={arXiv \tt{2411.06859v1}},
}

\bib{HassHomology}{article}{
      author={Hass, J.},
       title={{Surfaces minimizing area in their homology class and group actions on 3-manifolds}},
        date={1988},
     journal={Math. Z.},
      volume={{\bf 199}},
       pages={501\ndash 509},
}

\bib{hatcherAT}{book}{
      author={Hatcher, Allen},
       title={{Algebraic Topology}},
   publisher={Cambridge University Press, Cambridge},
        date={2002},
        ISBN={0-521-79160-X; 0-521-79540-0},
}

\bib{Hempel3mfds}{book}{
      author={Hempel, J.},
       title={3-manifolds},
      series={AMS Chelsea Publishing Series},
   publisher={AMS Chelsea Pub.},
        date={2004},
}

\bib{Hempel}{article}{
      author={Hempel, J.},
       title={Some 3-manifold groups with the same finite quotients},
        date={2014},
     journal={{arXiv \tt{1409.3509}}},
         url={https://arxiv.org/abs/1409.3509},
}

\bib{HempelJacoExtensions}{article}{
      author={Hempel, John},
      author={Jaco, William},
       title={Fundamental groups of {$3$}-manifolds which are extensions},
        date={1972},
        ISSN={0003-486X},
     journal={Ann. of Math. (2)},
      volume={95},
       pages={86\ndash 98},
         url={https://doi.org/10.2307/1970856},
}

\bib{HodgsonWeeksCensus}{article}{
      author={Hodgson, Craig~D.},
      author={Weeks, Jeffrey~R.},
       title={Symmetries, isometries and length spectra of closed hyperbolic three-manifolds},
        date={1994},
        ISSN={1058-6458,1944-950X},
     journal={Experiment. Math.},
      volume={3},
      number={4},
       pages={261\ndash 274},
         url={http://projecteuclid.org/euclid.em/1048515809},
}

\bib{JSJ}{book}{
      author={Johannson, K.},
       title={{Homotopy equivalences of 3-manifolds with boundaries}},
   publisher={Lect. Notes Math., 761. Springer, Berlin},
        date={1979},
}

\bib{JS}{article}{
      author={Jaco, W.},
      author={Shalen, P.B.},
       title={{A new decomposition theorem for irreducible sufficiently-large 3-manifolds. Algebraic and geometric topology}},
        date={1978},
     journal={Proc. Sympos. Pure Math., Stanford Univ., Stanford, Calif., 1976), Part 2, pp. 71–84, Proc. Sympos. Pure Math., XXXII, Amer. Math. Soc., Providence, R.I.},
       pages={71\ndash 84},
}

\bib{JZ}{article}{
      author={Jaikin-Zapirain, A.},
       title={{Recognition of being fibered for compact 3-manifolds}},
        date={2020},
     journal={Geom. Topol.},
      volume={{\bf 24(1)}},
       pages={409\ndash 420},
}

\bib{KN}{article}{
      author={Kitano, T.},
      author={Nozaki, Y.},
       title={{Finiteness of the image of the Reidemeister torsion of a splice}},
        date={2020},
     journal={Ann. Math. Blaise Pascal},
      volume={27(1)},
       pages={19\ndash 36},
}

\bib{LiuArithmeticDetection}{article}{
      author={Liu, Y.},
       title={{Finite Quotients, Arithmetic Invariants, and Hyperbolic Volume}},
        date={2021},
     journal={Peking Math. J.},
      volume={3},
}

\bib{LiuAlmostProfiniteRigidity}{article}{
      author={Liu, Y.},
       title={{Finite-volume hyperbolic 3-manifolds are almost determined by their finite quotient groups}},
        date={2023},
     journal={Invent. Math.},
      volume={{\bf 231(2)}},
       pages={741\ndash 804},
}

\bib{Martelli}{book}{
      author={Martelli, B.},
       title={An introduction to geometric topology},
   publisher={CreateSpace Independent Publishing Platform},
        date={2016},
}

\bib{milneCFT}{misc}{
      author={Milne, J.S.},
       title={Class field theory (v4.03)},
        date={2020},
        note={Available at www.jmilne.org/math/},
}

\bib{MilnorS2}{article}{
      author={Milnor, J.},
       title={{A Unique Decomposition Theorem for 3-Manifolds}},
        date={1962},
     journal={Am. J. Math.},
      volume={{\bf 84(1)}},
       pages={1\ndash 7},
}

\bib{MaclachlanReidBook}{book}{
      author={Maclachlan, C.},
      author={Reid, A.W},
       title={{The Arithmetic of Hyperbolic 3-Manifolds}},
   publisher={Graduate Texts in Mathematics {\bf 219}, Springer-Verlag, New York},
        date={2000},
}

\bib{CullerShalen}{article}{
      author={Marc, C.},
      author={Shalen, P.B.},
       title={Varieties of group representations and splittings of 3-manifolds},
        date={1983},
     journal={Annals of Mathematics},
      volume={117},
      number={1},
       pages={109\ndash 146},
}

\bib{ns}{article}{
      author={Nikolov, N.},
      author={Segal, D.},
       title={{On finitely generated profinite groups, I: Strong completeness and uniform bounds}},
        date={2007},
     journal={Annals of Math.},
      volume={{\bf 165}},
       pages={171\ndash 238},
}

\bib{paoluzziporti}{article}{
      author={Paoluzzi, L.},
      author={Porti, J.},
       title={{Invariant character varieties of hyperbolic knots with symmetries}},
        date={2018},
     journal={Math. Proc. Camb. Philos. Soc.},
      volume={165(2)},
       pages={193 \ndash  208},
}

\bib{Purcell}{book}{
      author={Purcell, J.S.},
       title={{Hyperbolic knot theory}},
   publisher={AMS Graduate Studies in Mathematics {\bf 209}},
        date={2020},
}

\bib{reid2018profinite}{inproceedings}{
      author={Reid, Alan~W.},
       title={Profinite rigidity},
        date={2018},
   booktitle={Proceedings of the {I}nternational {C}ongress of {M}athematicians---{R}io de {J}aneiro 2018. {V}ol. {II}. {I}nvited lectures},
   publisher={World Sci. Publ., Hackensack, NJ},
       pages={1193\ndash 1216},
}

\bib{NickSnapPyNT}{misc}{
      author={Rouse, N.},
       title={snappy-nt},
   publisher={GitHub},
         how={\url{https://github.com/nicholasrouse/snappy-nt}},
        date={2022},
        note={Accessed: 2025-10-24},
}

\bib{ruberman}{article}{
      author={Ruberman, D.},
       title={{Seifert surfaces of knots in $S^4$}},
        date={1990},
     journal={Pacific J. Math.},
      volume={145(1)},
       pages={97\ndash 116},
}

\bib{RubermanOverflow}{misc}{
      author={Ruberman, D.},
       title={Hyperbolic homology spheres with infinite $\mathrm{SL}_2(\mathbb{C})$ character variety},
         how={MathOverflow},
         url={https://mathoverflow.net/q/442823},
        note={URL:https://mathoverflow.net/q/442823 (version: 2023-03-16)},
}

\bib{RibesZalesskiiBook}{book}{
      author={Ribes, L.},
      author={Zalesskii, P.A.},
       title={{Profinite Groups}},
   publisher={Ergeb. der Math. {\bf 40}, Springer-Verlag, Berlin},
        date={2000},
}

\bib{SerreTrees}{book}{
      author={Serre, J-P},
       title={{Trees}},
   publisher={Springer-Verlag, Berlin},
        date={1980},
}

\bib{StallingsSurface}{book}{
      author={Stallings, J.},
       title={{Group theory and three-dimensional manifolds. A James K. Whittemore Lecture in Mathematics given at Yale University, 1969.}},
   publisher={Yale Mathematical Monographs {\bf 4}, Yale University Press, New Haven},
        date={1980},
}

\bib{ScottWall}{inproceedings}{
      author={Scott, P.},
      author={Wall, T.},
       title={Topological methods in group theory},
        date={1979},
   booktitle={{Homological group theory (Proc. Sympos., Durham, 1977), London Math. Soc. Lecture Note Ser., {\bf 36}, Cambridge Univ. Press, Cambridge}},
       pages={137\ndash 203},
}

\bib{TaharaGL3Z}{article}{
      author={Tahara, K.},
       title={{On the finite subgroups of GL(3,$\Z$)}},
        date={1971},
     journal={Nagoya Math. J.},
      volume={41},
       pages={169\ndash 209},
}

\bib{sagemath}{manual}{
      author={{The Sage Developers}},
       title={{S}agemath, the {S}age {M}athematics {S}oftware {S}ystem ({V}ersion 10.4)},
        date={2024},
        note={{\tt https://www.sagemath.org}},
}

\bib{wilkesSFS}{article}{
      author={Wilkes, G.},
       title={{Profinite rigidity for Seifert fibre spaces}},
        date={2017},
     journal={Geom. Dedicata},
      volume={{\bf 188(1)}},
       pages={141–163},
}

\bib{WZ1}{article}{
      author={Wilton, H.},
      author={Zalesskii, P.},
       title={{Distinguishing geometries using finite quotients}},
        date={2017},
     journal={Geom. Topol.},
      volume={{\bf 21(1)}},
       pages={345\ndash 384},
}

\bib{WZ2}{article}{
      author={Wilton, H.},
      author={Zalesskii, P.},
       title={{Profinite detection of 3-manifold decompositions}},
        date={2019},
     journal={Compos. Math.},
      volume={{\bf 155(2)}},
       pages={246\ndash 259},
}

\end{biblist}
\end{bibdiv}

\end{document}